\numberwithin{equation}{section}
\newcommand{\R}{{\mathbb R}}
\newcommand{\N}{{\mathbb N}}
\renewcommand{\geq }{\geqslant}
\renewcommand{\leq }{\leqslant}
\def\neweq#1{\begin{equation}\label{#1}}
\def\endeq{\end{equation}}
\def\eq#1{(\ref{#1})}
\newtheorem{theorem}{Theorem}[section]
\newtheorem{proposition}[theorem]{Proposition}
\newtheorem{lemma}[theorem]{Lemma}
\theoremstyle{definition}
\begin{document}
	
	\title[Non-homogeneous partially hinged plates]{Optimization of the structural performance of non-homogeneous  partially hinged rectangular plates}

\author[Alessio FALOCCHI]{Alessio FALOCCHI}
\address{\hbox{\parbox{5.7in}{\medskip\noindent{Dipartimento di Scienze Matematiche, \\
				Politecnico di Torino,\\ Corso Duca degli Abruzzi 24, 10129 Torino, Italy. \\[3pt]
				\em{E-mail address: }{\tt alessio.falocchi@polito.it}}}}}
				\keywords{gap function; torsional instability; mass density}
				
				\begin{abstract} 
					We consider a non-homogeneous partially hinged rectangular plate having structural engineering applications. In order to study possible remedies for torsional instability phenomena we consider the gap function as a measure of the torsional performances of the plate. We treat different configurations of load and we study which density function is optimal for our aims. The analysis is in accordance with some results obtained studying the corresponding eigenvalue problem in terms of maximization of the ratio of specific eigenvalues. Some numerical experiments complete the analysis.
					
				\end{abstract}
			\maketitle
				
		\section{Introduction}\label{1}	
		We study a long narrow rectangular thin plate $\Omega\subset \R^2$, hinged at the short edges and free on the remaining two, see \cite{fergaz}. This plate may model the deck of a bridge; since this kind of structure exhibits problems of flutter instability, e.g. see \cite{bookgaz,larsen,rocard}, we optimize its design in order to reduce the phenomenon. To this aim one may vary the shape of the plate, see \cite{bebuga2}, or modify the materials composing it, see \cite{bebugazu,befafega,befa}. \par 
		Here we fix the geometry of the plate, assuming that it has length $\pi$ and width $2\ell$ with $2\ell\ll\pi$ so that
		$$
		\Omega=(0,\pi)\times(-\ell,\ell)\subset\R^2\,;
		$$ 
		we assume that  the plate is not homogeneous, i.e. it features variable density function $p=p(x,y)$; our aim is to find the optimal density configuration in order to improve the structural performance of the plate.
		
		In a rectangular plate it is possible to distinguish vertical and torsional oscillations; the most problematic are the second ones, that may cause the collapse of the structure, see \cite{bookgaz}.
		Then we consider a functional, named \textit{gap function}, able to
		measure the torsional performance of the plate, see also \cite{proceedingpadova}. In particular, this functional
		measures the gap between the displacements of the two free edges of the structure; the higher is the gap the higher is the torsional motion of the plate. More precisely, we maximize the maximum of the absolute value of the gap function in a class of external forcing term; then we consider its minimization in a class of density functions. Hence, our final goal is to find the worst force and the best density in order to reduce the torsional oscillation of the plate. 
		
		Since the explicit solution of this \textit{minimaxmax} problem is currently out of reach, we proceed testing the plate with some motivated external forces.
		Then we consider different densities $p(x,y)$ in order to understand how the gap function varies; the choice of $p(x,y)$ is driven by some results proposed in \cite{befa}.
		 Here the authors present a study on the correspondent weighted eigenvalue problem and they compare different density functions in order to find the optimal, maximizing the ratio between the first torsional eigenvalue and the previous longitudinal; they tested some density functions proposing theoretical and numerical justifications. We point out that the study of a ratio of eigenvalues has some limits; first of all it requires to consider two specific eigenvalues, moreover the direct optimization of the ratio is very involved. As a consequence, the question is often dealt with in terms of minimization or maximization of a single eigenvalue, see \cite{befa} for details.  Here we compare the density functions proposed in \cite{befa} and we observe that $p(x,y)$ optimal for \cite{befa} are optimal also with respect to the reduction of the gap function. This result confirms that the gap function is a reliable measure for the torsional performances of rectangular plates; furthermore, it is a useful tool to get information on optimal reinforces in order to reduce torsional instability phenomena.\par The paper is organized as follows. In Section \ref{prelim} we introduce some preliminaries and notations and we define longitudinal and torsional modes of vibration. In Section \ref{gapsec} we define the gap function, we write the \textit{minimaxmax} problem we are interested in and we state the existence results, proved in Section \ref{proof}. In Section \ref{weightsec} we describe the density functions that are meaningful for our aims. In Section \ref{L2sec} we study the problem considering external forces in $L^2(\Omega)$ and providing some numerical experiments to support the theoretical results. 
		\par

		\section{Preliminaries and Variational setting}\label{prelim}
		\subsection{Definition of the problem}
		We derive the stationary equation which we are interested in from the energy of the system; we denote by $u=u(x,y)$ the vertical displacement of the plate $\Omega$ having mass surface density $p=p(x,y)$. In general, since we are dealing with a non-homogeneous plate, we may consider the modulus of Young $E=E(x,y)$ and the Poisson ratio $\sigma=\sigma(x,y)$ of the materials forming the plate not constant. 
		We suppose that an external force for the unit mass $f=f(x,y)$ acts on the plate in the vertical direction. Thanks to the Kirchhoff-Love theory \cite{Kirchhoff,Love}, the energy of the plate is given by
		\begin{equation*}\label{energy}
		\mathbb{E}(u)=\dfrac{h^3}{12}\int_{\Omega}\dfrac{E}{1-\sigma^2}\bigg(\dfrac{(\Delta u)^2}{2}+(1-\sigma)(u_{xy}^2-u_{xx}u_{yy})\bigg)dxdy-\int_{\Omega}pfu\,dxdy,
		\end{equation*}
		where $h$ is its constant thickness, see also \cite{fergaz}.\par 
		To proceed with the classical minimization of the functional, we need some information on the regularity of the functions representing the materials composing the plate, i.e. $p(x,y)$, $E(x,y)$, $\sigma(x,y)$. We consider the possibility that the plate is composed by different materials, hence we cannot assume the continuity of the previous functions. In general discontinuous Young modulus and Poisson ratio generate some mathematical troubles in finding the minimization problem in strong form. For the civil engineering applications, which we are interested in, we point out that the Poisson ratio does not vary so much with respect to the possible choice of the materials; therefore, as a first approach, we suppose $E$ and $\sigma$ constant in space, while the density of the plate is in general variable and possibly discontinuous. Hence we have
		 \begin{equation*}\label{energy2}
		 \mathbb{E}(u)=\dfrac{Eh^3}{12(1-\sigma^2)}\int_{\Omega}\bigg(\dfrac{(\Delta u)^2}{2}+(1-\sigma)(u_{xy}^2-u_{xx}u_{yy})\bigg)dxdy-\int_{\Omega}pfu\,dxdy;
		 \end{equation*}
		in this framework we minimize the energy functional, we divide the differential equation for the flexural rigidity $\frac{Eh^3}{12(1-\sigma^2)}$ and, including it in the density function, we obtain
			\begin{equation}\label{weight}
		\begin{cases}
		\Delta^2 u=p(x,y) f(x,y) & \qquad \text{in } \Omega \\
		u(0,y)=u_{xx}(0,y)=u(\pi,y)=u_{xx}(\pi,y)=0 & \qquad \text{for } y\in (-\ell,\ell)\\
		u_{yy}(x,\pm\ell)+\sigma
		u_{xx}(x,\pm\ell)=u_{yyy}(x,\pm\ell)+(2-\sigma)u_{xxy}(x,\pm\ell)=0
		& \qquad \text{for } x\in (0,\pi)\, .
		\end{cases}
		\end{equation}
		The boundary conditions on the short edges are of Navier type, see \cite{navier}, and model the situation in which the plate is hinged on $\{0,\pi\}\times(-\ell,\ell)$. Instead, the boundary conditions on the large edges are of Neumann type, modeling the fact that the deck is free to move vertically; for the Poisson ratio we shall assume 
		\begin{equation}\label{poisson}
		\sigma\in\bigg(0,\dfrac{1}{2}\bigg),
		\end{equation}
		since most of the materials have values in this range.\par 
		In the sequel we denote by $\|\cdot\|_q$ the norm related to the Lebesgue spaces $L^q(\Omega)$ with $1\leq q\leq\infty$ and  we refer to $q'$ as the conjugate of $q$, i.e. $1/q+1/q'=1$ with the usual conventions; moreover, given a functional space $V(\Omega)$, in the notation of the correspondent norm and scalar product we shall omit the set $\Omega$, e.g. $\|\cdot\|_{V}:=\|\cdot\|_{V(\Omega)}$. 
		
		In the next sections we study the behaviour of the plate with respect to different weight functions $p$ and external forcing terms $f$.

		\subsection{Families of forcing terms and weight functions}

		We introduce $$\mathcal{F}_{V}:=\{f\in V(\Omega): \|f\|_{V}=1\}$$ the set of admissible forcing terms, fixed a certain functional space $V$.
		We introduce a family of weights to which $p$ belongs 
		\begin{equation} \label{eq:famiglia}
		\mathcal{P}_{L^\infty}^{\alpha, \beta}:=\left\{p\in L^\infty(\Omega):\,\alpha\leq p\leq\beta\,,\,\,  p(x,y)=p(x,-y)  \ \text{a.e. in } \Omega,\,\, \int_{\Omega}p\,dxdy=|\Omega| \, \right\} \, 
		\end{equation} 	
where $\alpha,\beta \in\mathbb{R}^+$ with $\alpha<\beta$ fixed.  	
When $f$ belongs to certain functional spaces, we need further regularity on the weight functions; therefore we introduce a second family
\begin{equation*} \label{eq:famiglia2}
\mathcal{P}_{H^2}^{\alpha, \beta}:=\left\{p\in H^2(\Omega):\,p\in \mathcal{P}_{L^\infty}^{\alpha, \beta}\quad\text{and}\quad\exists\,\kappa>1\,:\ \|p\|_{H^2}\leq \kappa\sqrt{|\Omega|} \right\},
\end{equation*}
with $\alpha,\beta \in\mathbb{R}^+$ and $\alpha<\beta$ fixed.
 The integral condition in \eqref{eq:famiglia} represents the preservation of the total mass of the plate; this is our fixed parameter, useful to compare the results between different weights.  The bound on $\|p\|_{H^2}$ in $\mathcal{P}_{H^2}^{\alpha, \beta}$ is merely a technical condition to gain compactness; by H\"{o}lder inequality the preservation of the total mass condition yields $\|p\|_{H^2}\geq\sqrt{|\Omega|} $. Therefore, we choose  $\kappa>1$ to exclude the trivial case $p\equiv1$ in $\Omega$.
 Indeed, we will always assume $$0<\alpha<1<\beta\,,$$
 studying the effect of a non-constant weight on the solution of \eq{weight}.	
 The assumption $\alpha<1<\beta$ is not restrictive; if we assume $\beta= 1$, it must be $p \equiv 1$ a.e. in $\Omega$, since otherwise we would have
 $\int_\Omega p\,dx\,dy<|\Omega|$; similarly, if we consider $\alpha=1$.\par
Moreover, we are interested in designs which are symmetric with respect to the mid-line of the roadway, being $\ell$ very small with respect to $\pi$. From a mathematical point of view, this assures two classes of eigenfunctions for the correspondent eigenvalue problem, respectively, even or odd in the $y$-variable; we shall clarify this question in Section \ref{torsdef}. \par
	 \subsection{Existence and uniqueness result}
	  We introduce the space 
	 $$
		H^2_*(\Omega)=\big\{u\in H^2(\Omega): u=0\mathrm{\ on\ }\{0,\pi\}\times(-\ell,\ell)\big\}\,,
		$$
		where we study the weak solution of \eqref{weight}.
		Let us observe that the condition $u=0$ has to be meant in a classical sense because $\Omega\subset\R^2$ and the energy space $H^2_*(\Omega)$ embeds into continuous functions. 
		Furthermore, $H^2_*(\Omega)$ is a Hilbert space when endowed with the scalar product
		$$
		(u,v)_{H^2_*}:=\int_\Omega \left[\Delta u\Delta v+(1-\sigma)(2u_{xy}v_{xy}-u_{xx}v_{yy}-u_{yy}v_{xx})\right]\, dx \, dy \,
		$$
		and associated norm
		$$
		\|u\|_{H^2_*}^2=(u,u)_{H^2_*} \, ,
		$$
		which is equivalent to the usual norm in $H^2(\Omega)$, see \cite[Lemma 4.1]{fergaz}. We denote by $H_*^{-2}(\Omega)$ the dual space of $H_*^2(\Omega)$ and $\langle\cdot,\cdot\rangle$ its dual product.
		We write the problem \eq{weight} in weak sense
		\begin{equation}
		\label{eigenweak}
		(u,v)_{H^2_*} =\langle pf,v\rangle \qquad\forall v\in H^2_*(\Omega).
		\end{equation}
		Let us clarify what we mean for the dual product in \eqref{eigenweak} with respect to the choice of $f$ and $p$. 
		
		If $f\in \mathcal{F}_{L^q}$ with $q\in(1,\infty]$ and $p\in \mathcal{P}_{L^\infty}^{\alpha,\beta}$, we write $\int_{\Omega}pf v\,dxdy$ instead of $\langle pf,v\rangle$.

		If $f\in H^{-2}_*(\Omega)$ we need further regularity on $p$, e.g. $p\in \mathcal{P}_{H^2}^{\alpha,\beta}$. 	We introduce the linear functional $T_f: H^2_*(\Omega)\rightarrow \mathbb{R}$ such that  $T_f(v):=\langle f,v\rangle$ for all $v\in H^2_*(\Omega)$ and we define 
		\begin{equation}\label{defdual}
\langle pf,v\rangle:=T_f(pv)\qquad  \forall v\in H^2_*(\Omega).
		\end{equation}
		
		Indeed, $H^2_*(\Omega)$ is a Banach algebra, being the  $H^2_*(\Omega)$-norm equivalent to the $H^2(\Omega)$-norm, see \cite[Theorem 5.23]{adams} applied to the Sobolev space $W^{m,p}(\Omega)$ with $m=p=2$ and $\Omega\subset \mathbb{R}^2$ convex with Lipschitz boundary. Therefore, if $p\in \mathcal{P}_{H^2}^{\alpha, \beta}$ we get $K>0$ such that
		\begin{equation*}
		pv\in H^2_*(\Omega)\qquad	\|pv\|_{H^2_*}\leq K\|p\|_{H^2_*}\|v\|_{H^2_*}\qquad \forall v\in H^2_*(\Omega).
		\end{equation*}
	We state the following result.
		\begin{proposition}\label{uniqueness}
			Let $f\in\mathcal{F}_V$ and $0<\alpha<1<\beta$. If
			\begin{enumerate}[i)]
				\item $V=L^q(\Omega)$ with $q\in (1,\infty]$ and $p\in \mathcal{P}_{L^\infty}^{\alpha, \beta}$,
				\item $V=H^{-2}_*(\Omega)$ and $p\in \mathcal{P}_{H^2}^{\alpha, \beta}$ ,
			\end{enumerate} 
			then the problem \eqref{eigenweak} admits a unique weak solution $u\in H^2_*(\Omega)\subset C^0(\overline \Omega)$.
		\end{proposition}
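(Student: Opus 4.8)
The plan is to invoke the Riesz representation theorem, exploiting the fact that the bilinear form on the left-hand side of \eqref{eigenweak} is exactly the scalar product $(\cdot,\cdot)_{H^2_*}$ that makes $H^2_*(\Omega)$ a Hilbert space. Coercivity and continuity of the form are therefore automatic by the very definition of the norm $\|\cdot\|_{H^2_*}$, so the whole problem reduces to checking that the right-hand side $v\mapsto\langle pf,v\rangle$ is a bounded linear functional on $H^2_*(\Omega)$. Once this is settled, Riesz produces a unique $u\in H^2_*(\Omega)$ satisfying \eqref{eigenweak}, and the membership $u\in C^0(\overline\Omega)$ comes for free from the embedding $H^2_*(\Omega)\hookrightarrow C^0(\overline\Omega)$ already recalled.

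It remains to establish boundedness of the forcing functional in each of the two regimes. In case i), with $f\in L^q(\Omega)$ and $p\in\mathcal{P}_{L^\infty}^{\alpha,\beta}$, I would write $\langle pf,v\rangle=\int_\Omega pfv\,dxdy$ and use $0<\alpha\le p\le\beta$ together with H\"older's inequality to bound it by $\beta\|f\|_q\|v\|_{q'}$. Since $H^2_*(\Omega)$ embeds continuously into $C^0(\overline\Omega)$, hence into $L^{q'}(\Omega)$ for every $q'\in[1,\infty]$, one has $\|v\|_{q'}\le C\|v\|_{H^2_*}$, so the functional is bounded by $\beta C\|f\|_q$. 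In case ii), with $f\in H^{-2}_*(\Omega)$ and $p\in\mathcal{P}_{H^2}^{\alpha,\beta}$, the definition \eqref{defdual} gives $\langle pf,v\rangle=T_f(pv)$; invoking the Banach algebra estimate stated just before the proposition, $pv\in H^2_*(\Omega)$ with $\|pv\|_{H^2_*}\le K\|p\|_{H^2_*}\|v\|_{H^2_*}$, whence $|\langle pf,v\rangle|\le\|f\|_{H^{-2}_*}\|pv\|_{H^2_*}\le K\|f\|_{H^{-2}_*}\|p\|_{H^2_*}\|v\|_{H^2_*}$, again bounded.

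These two estimates identify the right-hand side as an element of $H^{-2}_*(\Omega)$, so the Riesz representation theorem applies and yields the unique solution $u$. I do not anticipate any genuine obstacle: coercivity is built into the choice of the equivalent norm, and the only real content is the continuity of the forcing functional, resting in case i) on the Sobolev embedding into continuous functions and in case ii) on the Banach algebra property. The one minor point worth verifying carefully is the endpoint $q=\infty$ (so $q'=1$) in case i), but it is covered by the same embedding chain $H^2_*(\Omega)\hookrightarrow C^0(\overline\Omega)\hookrightarrow L^1(\Omega)$ on the bounded domain $\Omega$.
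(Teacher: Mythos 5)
Your proposal is correct and follows essentially the same route as the paper: the paper invokes Lax--Milgram for the continuous, coercive form $(\cdot,\cdot)_{H^2_*}$, which, since that form is precisely the scalar product of $H^2_*(\Omega)$, amounts to the Riesz representation argument you give. The substantive steps coincide exactly---boundedness of $v\mapsto\langle pf,v\rangle$ via H\"older's inequality and the embedding $H^2_*(\Omega)\hookrightarrow C^0(\overline\Omega)$ in case i), and via the Banach algebra property of $H^2_*(\Omega)$ in case ii).
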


	\begin{proof}	
	By \cite{fergaz} we have that the bilinear form $(u,v)_{H^2_*}$ is continuous and coercive,  hence to apply Lax Milgram Theorem we consider the functional $\langle pf,v\rangle$. 
	
	$i)$ If $p\in\mathcal{P}_{L^\infty}^{\alpha,\beta}$ and $f\in \mathcal{F}_{L^q}$ with $q\in (1,\infty]$ then $pf\in L^q(\Omega)$; moreover we have $\Omega\subset \mathbb{R}^2$ so that $H^2_*(\Omega)$ is embedded in $C^0(\overline \Omega)$. Therefore, applying H\"older inequality, we obtain $C_1>0$ such that
	$$
	|\langle pf,v\rangle|=\bigg|\int_\Omega pfv\,dxdy\bigg|\leq \|pf\|_q\|v\|_{q'}\leq C_1\|v\|_{H^2_*}\qquad \forall v\in H^2_*(\Omega),
	$$
		so that $\langle pf,v\rangle$ is a linear and continuous functional.

$ii)$ By \eqref{defdual} we observe that $T_f(pv)$ is linear and continuous, indeed we have $C_2>0$ such that
$$
|T_f(pv)|=|\langle f,pv\rangle|\leq\|f\|_{H^{-2}_*} \|pv\|_{H^2_*}\leq C_2\|v\|_{H^2_*}\qquad \forall v\in H^2_*(\Omega),
$$	
being $H^2_*(\Omega)$ a Banach algebra.

The solution $u$ is continuous since the space $H^2_*(\Omega)$ embeds into $C^0(\overline \Omega)$.
\end{proof}

\subsection{Definition of longitudinal and torsional modes}\label{torsdef}
 To tackle \eq{weight} we need some preliminary information on the associated eigenvalue problem:
\begin{equation}\label{weighteig}
\begin{cases}
\Delta^2 u=\lambda p(x,y)u & \qquad \text{in } \Omega \\
u(0,y)=u_{xx}(0,y)=u(\pi,y)=u_{xx}(\pi,y)=0 & \qquad \text{for } y\in (-\ell,\ell)\\
u_{yy}(x,\pm\ell)+\sigma
u_{xx}(x,\pm\ell)=u_{yyy}(x,\pm\ell)+(2-\sigma)u_{xxy}(x,\pm\ell)=0
& \qquad \text{for } x\in (0,\pi)\, .
\end{cases}
\end{equation}
	As in \cite{bogamo}, we introduce the subspaces of $H^2_*(\Omega)$:
\begin{equation*}\label{subspaces}
\begin{split}
&H^2_\mathcal{E}(\Omega):=\{u\in H^2_*(\Omega): u(x,-y)=u(x,y)\quad\forall (x,y)\in\Omega\},\\&H^2_\mathcal{O}(\Omega):=\{u\in H^2_*(\Omega): u(x,-y)=-u(x,y)\quad\forall (x,y)\in\Omega\},
\end{split}
\end{equation*}
where
\begin{equation}\label{decomposition}
H^2_\mathcal{E}(\Omega)\perp H^2_\mathcal{O}(\Omega), \hspace{5mm}H^2_*(\Omega)=H^2_\mathcal{E}(\Omega)\oplus H^2_\mathcal{O}(\Omega)\,.
\end{equation}
We say that the eigenfunctions in $H^2_\mathcal{E}(\Omega)$ are \emph{longitudinal} modes and those in $H^2_\mathcal{O}(\Omega)$ are \emph{torsional} modes.  For all $u\in H^2_*(\Omega)$ we denote by $u^{e}=\frac{u(x,y)+u(x,-y)}{2}\in H^2_\mathcal{E}(\Omega)$ and $u^o=\frac{u(x,y)-u(x,-y)}{2}\in H^2_\mathcal{O}(\Omega)$ respectively its even and odd components. Moreover, we set
\begin{equation*}\label{dualsubspaces}
\begin{split}
&H^{-2}_\mathcal{E}(\Omega):=\{f\in H^{-2}_*(\Omega): \langle f,v\rangle=0\quad\forall v\in H^2_\mathcal{O}(\Omega)\},\\&H^{-2}_\mathcal{O}(\Omega):=\{f\in H^{-2}_*(\Omega): \langle f,v\rangle=0\quad\forall v\in H^2_\mathcal{E}(\Omega)\}.
\end{split}
\end{equation*}
Since $H^2_*(\Omega)=H^{-2}_\mathcal{E}(\Omega)\oplus H^{-2}_\mathcal{O}(\Omega)$,  there exists a unique couple $(f^e,f^o)\in H^{-2}_\mathcal{E}(\Omega)\times H^{-2}_\mathcal{O}(\Omega)$ such that $f=f^e+f^o$ for all $f\in H^{-2}_*(\Omega)$.
We endow the space $H^{-2}_*(\Omega)$ with the norm  $\|f\|_{H^{-2}_*}:=\sup_{\|v\|_{H^2_*}=1 }\langle f,v\rangle$, observing that
\begin{equation}\label{dualnorm}
\|f\|_{H^{-2}_*}=\max\{\|f^o\|_{H^{-2}_*},\|f^e\|_{H^{-2}_*}\}	\qquad \forall f\in H^{-2}_*(\Omega).
\end{equation}

When  $p\equiv 1$ the whole spectrum of \eqref{weighteig} is determined explicitly in \cite{fergaz} and gives two class of eigenfunctions belonging respectively to $H^2_\mathcal{E}(\Omega)$ or $H^2_\mathcal{O}(\Omega)$. Thanks to the symmetry assumption on $p$ we obtain the same distinction for all the linearly independent eigenfunctions of the weighted eigenvalue problem \eq{weighteig}.


  We denote by $\mu_m(p)$ and $\nu_m(p)$ respectively the ordered weighted longitudinal and torsional eigenvalues of \eq{weighteig}, repeated with their multiplicity; moreover, we denote respectively by $z^p_{m}(x,y)\in H^2_\mathcal{E}(\Omega)$ and $\theta^p_m(x,y)\in H^2_\mathcal{O}(\Omega)$, the corresponding (ordered) longitudinal and torsional linearly independent eigenfunctions of \eqref{weighteig}. We consider the eigenfunctions normalized in $L^2_p(\Omega)$ ($L^2(\Omega)$-weighted), i.e. 
\begin{equation}\label{normalization}
\|\sqrt{p}\,z^p_m\|^2_2=\int_{\Omega}p\,(z^p_m)^2\,dxdy=1\qquad \|\sqrt{p}\,\theta^p_m\|^2_2=\int_{\Omega}p\,(\theta^p_m)^2\,dxdy=1.
\end{equation}


\section{Gap function}\label{gapsec}
In real structures the most problematic motions are related to the torsional oscillations, i.e. those in which prevail torsional modes. How can we measure the torsional behaviour? By Proposition \ref{uniqueness}, the solution of \eq{weight} is continuous; hence, we define the \textit{gap function}, see also \cite{proceedingpadova},
\begin{equation}\label{gapdef}
\mathcal{G}_{f,p}(x):=u(x,\ell)-u(x,-\ell)\qquad \forall x\in[0,\pi],
\end{equation}
depending on the weight $p$ and on the external load $f$.
This function gives for every $x\in[0,\pi]$ the difference between the vertical displacements of the free edges, providing a measure of the torsional response. The maximal gap is given by
\begin{equation}\label{gapmax}
\mathcal{G}^{\infty}_{f,p}:=\max\limits_{x\in(0,\pi)}|\mathcal{G}_{f,p}(x)|.
\end{equation}
In this way we introduce the map $\mathcal{G}^\infty_{f,p}:\mathcal{F}_V\times \mathcal{P}_W^{\alpha, \beta}\rightarrow [0,+\infty)$ with $(f,p)\mapsto\mathcal{G}^\infty_{f,p}$, that we study respectively in the cases
\begin{equation}\label{casi}
\begin{split}
i)&\quad(V,W)=\big(L^q(\Omega),L^\infty(\Omega)\big) \text{ with } q\in(1,\infty]\\
ii)&\quad (V,W)=\big(H^{-2}_*(\Omega),H^2(\Omega)\big)
\end{split}
\end{equation}
for which Proposition \ref{uniqueness} assures the uniqueness of a solution to \eqref{weight}.\par 
Our aim is to find the worst $f\in\mathcal{F}_V$, i.e. the forcing term that maximizes $\mathcal{G}^\infty_{f,p}$, and the best weight $p\in \mathcal{P}_W^{\alpha, \beta}$ that minimizes $\mathcal{G}^\infty_{f,p}$.
More precisely we want to solve the \textit{minimaxmax} problem
\begin{equation*}
\mathcal{G}^\infty:=\min\limits_{p\in \mathcal{P}_W^{\alpha, \beta}}\max\limits_{f\in\mathcal{F}_V}\max\limits_{x\in(0,\pi)}|\mathcal{G}_{f,p}(x)|,
\end{equation*}
in the cases \eqref{casi}.\par 

In Section \ref{proof} we prove the existence results.
\begin{theorem}\label{existence}
	Given $p\in \mathcal{P}_W^{\alpha, \beta}$ with $0<\alpha<1<\beta$, if
	\begin{enumerate}[i)]
		\item $W=L^\infty(\Omega)$ and $f\in\mathcal{F}_{V}$ with $V=L^q(\Omega)$ $q\in (1,\infty]$,
		\item $W=H^2( \Omega)$ and $f\in\mathcal{F}_{V}$ with $V=H^{-2}_*(\Omega)$,
	\end{enumerate} 
	then the problem 
	\begin{equation}\label{max}
	\mathcal{G}^\infty_p:=\max\limits_{f\in\mathcal{F}_{V}}\mathcal{G}^\infty_{f,p}
	\end{equation}
	admits solution.\par 
\end{theorem}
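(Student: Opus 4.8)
The plan is to run the direct method of the calculus of variations on the unit sphere $\mathcal{F}_V$. First I would set $M:=\sup_{f\in\mathcal{F}_V}\mathcal{G}^\infty_{f,p}$ and check that it is finite. By Proposition \ref{uniqueness} the solution map $f\mapsto u_f$ is linear, and the estimates in its proof give $\|u_f\|_{H^2_*}\le C\|f\|_V$ with $C$ independent of $f$; since $H^2_*(\Omega)\hookrightarrow C^0(\overline\Omega)$, one gets $\mathcal{G}^\infty_{f,p}\le 2\|u_f\|_{C^0}\le C'\|f\|_V=C'$ on $\mathcal{F}_V$, so $M<\infty$. I would then fix a maximizing sequence $(f_n)\subset\mathcal{F}_V$ with $\mathcal{G}^\infty_{f_n,p}\to M$ and extract a convergent subsequence: in case i) $V=L^q$ with $q\in(1,\infty)$ is reflexive and in the endpoint $q=\infty$ we have $V=L^\infty=(L^1)^*$ with $L^1$ separable, while in case ii) $V=H^{-2}_*(\Omega)$ is a Hilbert space; in every case the bounded sequence $(f_n)$ admits a subsequence (not relabeled) converging to some $f\in V$, weakly when $V$ is reflexive and weakly-$*$ when $V=L^\infty$. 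A key preliminary observation is that multiplication by the fixed weight $p$ is continuous for these topologies: in case i), for $\phi\in L^{q'}$ one writes $\int_\Omega(pf_n)\phi=\int_\Omega f_n(p\phi)$ with $p\phi\in L^{q'}$, so $pf_n$ converges to $pf$ weakly (resp. weakly-$*$); in case ii), using \eqref{defdual} and the Banach-algebra property, $\langle pf_n,v\rangle=\langle f_n,pv\rangle\to\langle f,pv\rangle=\langle pf,v\rangle$ for every $v\in H^2_*(\Omega)$, since $pv\in H^2_*(\Omega)$.

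The second step is to pass to the limit in the solution operator and then in the gap. Testing \eqref{eigenweak} against $u_{f_n}$ and using the estimate above shows that $(u_{f_n})$ is bounded in $H^2_*(\Omega)$, so up to a further subsequence $u_{f_n}\rightharpoonup u$ in $H^2_*(\Omega)$; passing to the limit in $(u_{f_n},v)_{H^2_*}=\langle pf_n,v\rangle$ for each $v$ and using the convergence $pf_n\to pf$ just established, I obtain $(u,v)_{H^2_*}=\langle pf,v\rangle$ for all $v$, whence $u=u_f$ by the uniqueness in Proposition \ref{uniqueness}. The crucial compactness input is that the embedding $H^2_*(\Omega)\hookrightarrow C^0(\overline\Omega)$ is \emph{compact} (as $\Omega\subset\R^2$), so in fact $u_{f_n}\to u_f$ strongly in $C^0(\overline\Omega)$. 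Therefore the gap functions $\mathcal{G}_{f_n,p}(x)=u_{f_n}(x,\ell)-u_{f_n}(x,-\ell)$ converge uniformly on $[0,\pi]$ to $\mathcal{G}_{f,p}$, which forces $\mathcal{G}^\infty_{f_n,p}=\max_{[0,\pi]}|\mathcal{G}_{f_n,p}|\to\max_{[0,\pi]}|\mathcal{G}_{f,p}|=\mathcal{G}^\infty_{f,p}$, and hence $\mathcal{G}^\infty_{f,p}=M$.

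The last step is to verify that the candidate $f$ is admissible, namely $\|f\|_V=1$, and this is the only genuinely delicate point. Weak (weak-$*$) lower semicontinuity of the norm only gives $\|f\|_V\le\liminf\|f_n\|_V=1$, so the norm may a priori drop in the limit. I would repair this using that $f\mapsto\mathcal{G}^\infty_{f,p}$ is positively $1$-homogeneous, since it is induced by the linear solution operator: if $M=0$ any element of $\mathcal{F}_V$ is a maximizer, while if $M>0$ then $f\neq0$ and, were $\|f\|_V<1$, the rescaled force $\tilde f:=f/\|f\|_V\in\mathcal{F}_V$ would satisfy $\mathcal{G}^\infty_{\tilde f,p}=\mathcal{G}^\infty_{f,p}/\|f\|_V=M/\|f\|_V>M$, contradicting the definition of $M$; hence $\|f\|_V=1$ and $f$ realizes the maximum in \eqref{max}. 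I expect the main obstacle to be exactly this interplay: the $\max_x$ in $\mathcal{G}^\infty_{f,p}$ is not weakly continuous by itself and must be pushed to the limit through the strong $C^0$-convergence coming from the compact Sobolev embedding, while the simultaneous possible loss of norm under weak convergence is recovered only thanks to homogeneity.
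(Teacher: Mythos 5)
Your proposal is correct and follows essentially the same route as the paper: a maximizing sequence, weak/weak-$*$ compactness of the unit ball of $V$, passage to the limit in the weak formulation \eqref{eigenweak} (using $p\phi\in L^{q'}$, respectively the Banach-algebra property and \eqref{defdual}), the compact embedding $H^2_*(\Omega)\subset C^0(\overline\Omega)$ to get uniform convergence of the gap functions, and the rescaling argument by homogeneity to recover $\|f\|_V=1$ — the paper packages the middle steps as Lemma \ref{continuityV} (weak-$*$ continuity of $f\mapsto\mathcal{G}^\infty_{f,p}$) and then concludes exactly as you do. Your treatment is in fact slightly more careful on two minor points the paper glosses over: the a priori finiteness of the supremum and the degenerate case $M=0$ (or $\overline f=0$), where the paper's strict-inequality contradiction would not apply as written.
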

\begin{theorem}\label{existence2}
	Given $f\in \mathcal{F}_{V}$, if
	\begin{enumerate}[i)]
		\item $V=L^q(\Omega)$ with $q\in (1,\infty]$ and $p\in\mathcal{P}_W^{\alpha,\beta}$ ($0<\alpha<1<\beta$) with $W=L^\infty(\Omega)$,
		\item $V=H^{-2}_*(\Omega)$ and $p\in\mathcal{P}_W^{\alpha,\beta}$ ($0<\alpha<1<\beta$) with $W=H^2(\Omega)$,
	\end{enumerate} 
	then the problem 
	\begin{equation}\label{min}
	\min\limits_{p\in \mathcal{P}_W^{\alpha,\beta}}\mathcal{G}^\infty_p,
	\end{equation}
	
	admits solution.
	
\end{theorem}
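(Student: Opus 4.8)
The plan is to argue by the direct method of the calculus of variations. Writing $\mathcal{G}^\infty_p=\sup_{f\in\mathcal{F}_V}\mathcal{G}^\infty_{f,p}$ (the supremum being attained by Theorem \ref{existence}), the whole proof reduces to two ingredients: sequential compactness of the admissible class $\mathcal{P}_W^{\alpha,\beta}$ in a suitable weak topology, and sequential lower semicontinuity of $p\mapsto\mathcal{G}^\infty_p$ along the converging subsequence. The crucial observation is that a supremum of sequentially continuous functions is automatically sequentially lower semicontinuous: if for every fixed $f$ the map $p\mapsto\mathcal{G}^\infty_{f,p}$ is continuous, then from $\mathcal{G}^\infty_{f,\bar p}=\lim_n\mathcal{G}^\infty_{f,p_n}\le\liminf_n\mathcal{G}^\infty_{p_n}$ and a supremum over $f$ we obtain $\mathcal{G}^\infty_{\bar p}\le\liminf_n\mathcal{G}^\infty_{p_n}$. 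Hence the heart of the matter is the continuity of the single-force map $p\mapsto\mathcal{G}^\infty_{f,p}$, which is where I expect the main difficulty, in particular in case $ii)$.

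To prove this continuity, fix $f$ and let $p_n\to\bar p$ in the relevant topology (weak-$*$ in $L^\infty$ in case $i)$, weakly in $H^2$ in case $ii)$); denote $u_n:=u_{f,p_n}$ and $\bar u:=u_{f,\bar p}$ the corresponding solutions of \eqref{eigenweak}. Coercivity of $(\cdot,\cdot)_{H^2_*}$ together with the estimates in Proposition \ref{uniqueness} gives a uniform bound $\|u_n\|_{H^2_*}\le C$ (using $\|p_n\|_\infty\le\beta$ in case $i)$ and the Banach-algebra bound with $\|p_n\|_{H^2}\le\kappa\sqrt{|\Omega|}$ in case $ii)$), so $u_n\weakly u^*$ weakly in $H^2_*$ up to a subsequence. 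I would then pass to the limit in $(u_n,v)_{H^2_*}=\langle p_nf,v\rangle$ for each fixed $v$. In case $i)$ this uses $fv\in L^1(\Omega)$ and the weak-$*$ convergence of $p_n$, so that $\int_\Omega p_nfv\,dxdy\to\int_\Omega\bar pfv\,dxdy$. In case $ii)$ one has $\langle p_nf,v\rangle=T_f(p_nv)$ with $T_f$ a fixed element of $H^{-2}_*(\Omega)$, and the point is that $p_nv\weakly\bar pv$ weakly in $H^2_*$: the products are bounded in $H^2_*$ by the Banach-algebra inequality, while $p_n\to\bar p$ strongly in $L^\infty$ (compact embedding $H^2\hookrightarrow\hookrightarrow C^0(\overline\Omega)$) forces $p_nv\to\bar pv$ in $L^2$, which identifies the weak limit; hence $T_f(p_nv)\to T_f(\bar pv)$. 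By the uniqueness in Proposition \ref{uniqueness} we get $u^*=\bar u$, so the whole sequence converges. Finally, the compact embedding $H^2_*(\Omega)\hookrightarrow\hookrightarrow C^0(\overline\Omega)$ upgrades $u_n\weakly\bar u$ to uniform convergence, whence $\mathcal{G}_{f,p_n}\to\mathcal{G}_{f,\bar p}$ uniformly on $[0,\pi]$ and $\mathcal{G}^\infty_{f,p_n}\to\mathcal{G}^\infty_{f,\bar p}$.

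It remains to secure compactness and closedness of $\mathcal{P}_W^{\alpha,\beta}$. In case $i)$ the class is bounded in $L^\infty$, hence weak-$*$ sequentially precompact; in case $ii)$ the bound $\|p\|_{H^2}\le\kappa\sqrt{|\Omega|}$ gives weak $H^2$ precompactness and, by compact embedding, strong $L^\infty$ and a.e. convergence along a subsequence. In both cases the constraints defining the class pass to the limit: the pointwise bounds $\alpha\le p\le\beta$ are preserved by testing the weak-$*$ limit against nonnegative $L^1$ functions (resp.\ by a.e.\ convergence), the symmetry $p(x,y)=p(x,-y)$ is a linear constraint stable under weak limits, the mass constraint $\int_\Omega p\,dxdy=|\Omega|$ is preserved because the constant $1$ is an admissible test function, and in case $ii)$ the $H^2$-norm bound survives by weak lower semicontinuity of the norm. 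Thus the limit $\bar p$ is admissible. Taking now a minimizing sequence $p_n$ with $\mathcal{G}^\infty_{p_n}\to\inf_{\mathcal{P}_W^{\alpha,\beta}}\mathcal{G}^\infty_p=:d$, extracting the convergent subsequence above and invoking the lower semicontinuity established in the first two paragraphs yields $\mathcal{G}^\infty_{\bar p}\le d$; since $\bar p$ is admissible the reverse inequality is trivial, so $\mathcal{G}^\infty_{\bar p}=d$ and the minimum in \eqref{min} is attained. The step I expect to require the most care is the weak continuity of the product map $p\mapsto p\,v$ and of the duality pairing in case $ii)$; the reduction of lower semicontinuity to the fixed-force continuity, and the preservation of the constraints, are comparatively routine.
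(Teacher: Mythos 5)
Your proof is correct and takes essentially the same route as the paper: compactness of $\mathcal{P}_W^{\alpha,\beta}$ in the appropriate weak topology, continuity of the fixed-force map $p\mapsto\mathcal{G}^\infty_{f,p}$ via uniform $H^2_*$ bounds, passage to the limit in the weak formulation (including the Banach-algebra and strong $C^0$ identification of the limit of the products $p_n v$ in the dual case), the compact embedding $H^2_*(\Omega)\subset C^0(\overline\Omega)$, and finally the direct method. Your extra observation that a supremum of continuous maps is lower semicontinuous is a sensible refinement rather than a different approach: the paper's continuity lemma is proved for a fixed $f$ and then invoked directly, whereas you make explicit how minimization of $\mathcal{G}^\infty_p=\max_{f\in\mathcal{F}_V}\mathcal{G}^\infty_{f,p}$ follows from that fixed-force continuity.
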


The next result shows that for $p\in\mathcal{P}^{\alpha,\beta}_W$ ($y$-even), the worst force $f\in\mathcal{F}_V$ in terms of torsional performance can be sought in the class of the $y$-odd distributions or functions. 

\begin{proposition}\label{symteo}
	$i)$ Let $(V,W)$ as in \eqref{casi}-$i)$ then problem \eq{max} is equivalent to
	$$
	\max\{\mathcal{G}^\infty_{f,p}: f\in\mathcal{F}_{L^q},\,\, f(x,-y)=-f(x,y) \text{ a.e. in } \Omega\}.
	$$
	Moreover, if $q\in(1,\infty)$ any maximizer is necessarily odd with respect to $y$.
	
	$ii)$ Let $(V,W)$ as in \eqref{casi}-$ii)$, then problem \eq{max} is equivalent to
	$$
	 \max\{\mathcal{G}^\infty_{f,p}: f\in H^{-2}_\mathcal{O},\,\, \|f\|_{H^{-2}_*}=1\}.
	$$	
\end{proposition}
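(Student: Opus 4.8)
The plan is to exploit the parity of the data. Since the bilinear form $(\cdot,\cdot)_{H^2_*}$ and the boundary operators in \eqref{weight} are invariant under the reflection $y\mapsto-y$, and by definition of $\mathcal{P}^{\alpha,\beta}_W$ the weight $p$ is $y$-even, the (linear) solution map $f\mapsto u$ of \eqref{eigenweak}, well defined by Proposition \ref{uniqueness}, will respect the orthogonal splitting \eqref{decomposition}. Writing $f=f^e+f^o$ and $u=u^e+u^o$, I would first show that $u^o\in H^2_\mathcal{O}(\Omega)$ is driven by $f^o$ alone: testing \eqref{eigenweak} against $v\in H^2_\mathcal{O}(\Omega)$ only, and using that $pv\in H^2_\mathcal{O}(\Omega)$ (as $p$ is even) together with $\langle f^e,pv\rangle=0$ from the definition of $H^{-2}_\mathcal{E}(\Omega)$, one gets $(u^o,v)_{H^2_*}=\langle f^o,pv\rangle$ for every $v\in H^2_\mathcal{O}(\Omega)$. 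Since $u^e$ is $y$-even it contributes nothing to \eqref{gapdef}, while $u^o(x,-\ell)=-u^o(x,\ell)$; hence $\mathcal{G}_{f,p}(x)=2u^o(x,\ell)=\mathcal{G}_{f^o,p}(x)$ and, in particular, $\mathcal{G}^\infty_{f,p}=\mathcal{G}^\infty_{f^o,p}$. In words, the maximal gap sees only the odd component of the force.

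Next I would compare norms in order to move the whole norm budget onto $f^o$. By linearity $\mathcal{G}^\infty_{\lambda f,p}=|\lambda|\,\mathcal{G}^\infty_{f,p}$, so whenever $f^o\neq0$ one has $\mathcal{G}^\infty_{f,p}=\|f^o\|_V\,\mathcal{G}^\infty_{f^o/\|f^o\|_V,\,p}$, with $f^o/\|f^o\|_V$ an admissible odd competitor. In the setting \eqref{casi}-$i)$ the reflection $\tilde f(x,y):=f(x,-y)$ is an $L^q$-isometry and $f^o=\tfrac12(f-\tilde f)$, so $\|f^o\|_q\le\tfrac12(\|f\|_q+\|\tilde f\|_q)=1$; in the setting \eqref{casi}-$ii)$ the bound $\|f^o\|_{H^{-2}_*}\le\|f\|_{H^{-2}_*}=1$ is read off directly from \eqref{dualnorm}. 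Either way $\mathcal{G}^\infty_{f,p}\le\mathcal{G}^\infty_{f^o/\|f^o\|_V,\,p}$, so the maximum over $\mathcal{F}_V$ is already attained among $y$-odd forces (the case $f^o=0$ being trivial, as it gives zero gap). This yields the claimed equivalence in both $i)$ and $ii)$, and the existence of an odd maximizer is inherited from Theorem \ref{existence}.

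For the last assertion — that for $q\in(1,\infty)$ every maximizer is odd — I would argue by the equality case. First $\mathcal{G}^\infty_p>0$, since otherwise $u^o\equiv0$ for all odd data, which is impossible because aligning $f$ with a torsional eigenfunction $\theta^p_m$ produces a nonzero gap; thus at a maximizer $f^*$ one has $f^{*o}\neq0$. Feeding $f^*$ into the chain above forces $\|f^{*o}\|_q=\|f^*\|_q=1$, i.e. equality in the Minkowski estimate $\|f^*-\tilde f^*\|_q\le\|f^*\|_q+\|\tilde f^*\|_q$. As $L^q(\Omega)$ is strictly convex for $q\in(1,\infty)$, this equality gives $f^*=-c\,\tilde f^*$ a.e. for some $c\ge0$, and $\|f^*\|_q=\|\tilde f^*\|_q$ then forces $c=1$, namely $f^*(x,-y)=-f^*(x,y)$.

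I expect the delicate step to be the parity decoupling of the first paragraph in the dual case \eqref{casi}-$ii)$: there one must use \eqref{defdual} to make sense of $\langle pf,v\rangle=T_f(pv)$ and invoke the orthogonality of $H^{-2}_\mathcal{E}(\Omega)$ and $H^{-2}_\mathcal{O}(\Omega)$ to guarantee that $u^o$ is genuinely insensitive to $f^e$. Note also that the $L^\infty$ endpoint ($q=\infty$) rightly escapes the uniqueness conclusion, since strict convexity — and hence the Minkowski equality analysis — fails there.
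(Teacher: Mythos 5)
Your proof is correct and follows the same architecture as the paper's: decouple the even and odd parts of $f$ and $u$ using the orthogonal splitting \eqref{decomposition} and the $y$-evenness of $p$ (your identity $(u^o,v)_{H^2_*}=\langle pf^o,v\rangle$ for odd $v$ is exactly the paper's \eqref{eq1}), observe that the gap sees only $u^o$, then rescale $f^o$ to unit norm using \eqref{dualnorm} in case $ii)$ and an $L^q$ bound in case $i)$, and finally run an equality-case argument for the strict statement when $q\in(1,\infty)$. The one genuine difference is the ingredient you use for the $L^q$ step: where the paper cites \cite[Lemma 9.1]{bebugazu} for the fiberwise inequality $\|\phi^o\|_{L^q}\leq\|\phi\|_{L^q}$, with strictness exactly when $\phi$ is non-odd, you derive both the inequality (via the reflection isometry $\tilde f(x,y)=f(x,-y)$ and Minkowski) and the strictness (via the equality case of Minkowski, i.e.\ strict convexity of $L^q$ for $1<q<\infty$, giving $f^*=-\tilde f^*$) directly. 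This makes your argument self-contained and correctly explains why the endpoint $q=\infty$ escapes the uniqueness conclusion; the trade-off is that you must handle the equality case of the triangle inequality yourself, which you do correctly. One point to be aware of: your strictness argument, like the paper's own contradiction argument, silently requires $\mathcal{G}^\infty_p>0$ (otherwise a maximizer could have $f^{*o}=0$); you at least flag this and sketch a justification via torsional eigenfunctions, but that sketch presumes $\theta^p_m(\cdot,\ell)\not\equiv 0$, which is explicit only for $p\equiv 1$ and is taken for granted for general $p$ — the paper makes the same tacit assumption, so this is not a gap relative to the source, just a point neither proof fully closes.
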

This proposition and its proof are inspired by \cite[Theorem 4.1-4.2]{bebugazu}, where a similar problem is dealt with and further results are given. We underline that the uniqueness of a $y$-odd maximizer is not guaranteed; indeed, solely in the case \eqref{casi}-$i)$ with $q\in(1,\infty)$ we obtain \textit{only} odd maximizers. In the cases \eqref{casi}-$i)$ with $q=\infty$ and \eqref{casi}-$ii)$ it is possible that other $f$, not necessarily odd, attain the maximum, see also \cite{bebugazu}.

	 \section{The choice of the weight function $p\in \mathcal{P}_{L^\infty}^{\alpha,\beta}$}\label{weightsec}
	  About the choice of the weight function $p\in \mathcal{P}_{W}^{\alpha,\beta}$ we are mainly interested in density functions not necessarily continuous, hence we consider $W=L^\infty(\Omega)$; therefore, in the rest of the paper we focus on \eqref{max}-\eqref{min} in the case $(V,W)=\big(L^q(\Omega),L^\infty(\Omega)\big)$ with $q\in(1,\infty]$. 
	
	 We refer to some results obtained on the correspondent eigenvalue problem \eqref{weighteig} presented in \cite{befa}.
Here the authors find the best rearrangement of materials in $\Omega$ which maximizes the ratio between two selected eigenvalues of \eqref{weighteig}, considering the optimization problem:
	 \begin{equation}\label{opt_intro}
	 \mathcal{R}=\sup_{p\in \mathcal{P}_{L^\infty}^{\alpha,\beta}}\dfrac{\nu(p)}{\mu(p)},
	 \end{equation}
	 where $\nu(p)$ and $\mu(p)$ are respectively a torsional and a longitudinal eigenvalue.
	 The direct study of \eq{opt_intro} is very involved, then there are some theoretical results on the problem of maximization of the first torsional eigenvalue or minimization of the first  longitudinal eigenvalue with respect to $p$; these results give suggestions on \eq{opt_intro} and support some conjectures also thanks to numerical experiments. More precisely, in \cite{befa} the authors proved theoretically that optimal weights $p(x,y)$ in increasing or reducing the first torsional or longitudinal eigenvalue must be of \textit{bang-bang} type, i.e.
	 $$
	 p(x,y) = \alpha \chi_{S} (x,y)+ \beta \chi_{\Omega \setminus S}(x,y)\,\quad \text{for a.e. } (x,y)\in \Omega\,,
	 $$
	 for a suitable set $S\subset \Omega$, $0<\alpha<1<\beta$ and $\chi_S$ is the characteristic function of $S$. In other words, the plate must be composed by two different materials properly located in $\Omega$; this is useful in engineering terms, since the manufacturing of two materials with constant density is simpler than the  assemblage of a material having variable density. On the other hand this produces some mathematical troubles, for instance when we consider as external forcing term $f\in H^{-2}_*(\Omega)$, see Proposition \ref{uniqueness}. \par 
	 In the sequel we distinguish five meaningful \textit{bang-bang} configurations  for $p\in \mathcal{P}_{L^\infty}^{\alpha,\beta}$; we list the cases representing on the right in black the localization of the reinforcing material on the plate:
	 \begin{enumerate}[i)]
	 	\item \fbox{$p\equiv 1$} \hfill\includegraphics[scale=0.22]{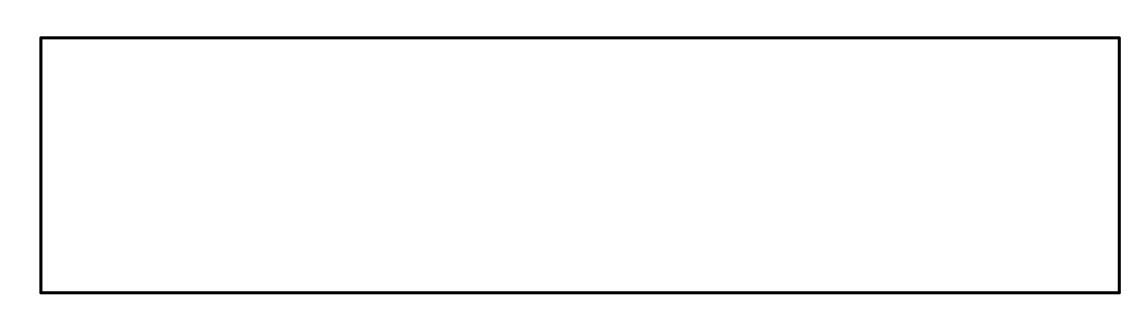}\vspace{2mm} \\
	 	This is a particular case when $\alpha=\beta=1$ that corresponds to the homogeneous plate; we do not apply reinforcements, but we consider this case to compare it with the non-homogeneous ones.\vspace{2mm}
	 	\item \fbox{$
	 		p^*(x,y)$}  \hfill\includegraphics[scale=0.22]{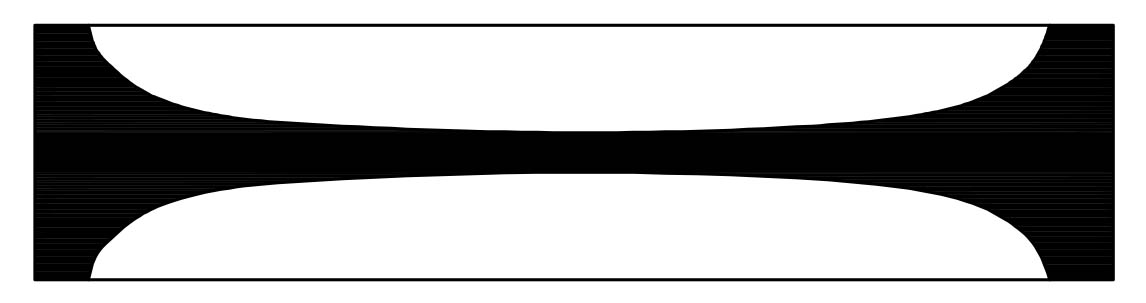}\vspace{2mm} \\
	 	This choice comes out from the study of the problem 
	 	\begin{equation}\label{CP2}
	 	\nu^{\alpha,\beta}_1:=\sup_{p\in \mathcal{P}_{L^\infty}^{\alpha,\beta}} \, \nu_1(p)\,.
	 	\end{equation}
	 	We call \textit{optimal pair} for \eqref{CP2} a couple $(\widehat{p},\theta^{\widehat p}_1)$ such that $\widehat p$ achieves the supremum in \eqref{CP2} and $\theta^{\widehat p}_1$ is an eigenfunction of $\nu_1(\widehat p)$.  
	 	In \cite{befa} the following result is proved.
	 	\begin{proposition}\label{thm-nu}
	 		\cite{befa}	Problem \eqref{CP2} admits an optimal pair $(\widehat{p},\theta^{\widehat p}_1) \in \mathcal{P}_{L^\infty}^{\alpha,\beta}\times H^2_\mathcal{O}(\Omega)$.
	 		Furthermore, $\theta^{\widehat p}_1$ and $\widehat{p}$ are related as follows
	 		\begin{equation*}\label{pnu}
	 		\widehat{p}(x,y) = \beta \chi_{ \widehat S} (x,y)+ \alpha \chi_{\Omega \setminus \widehat S}(x,y)\,\quad \text{for a.e. } (x,y)\in \Omega\,,
	 		\end{equation*}
	 		where $\widehat S = \{ (x,y)\in \Omega\,:\,(\theta^{\widehat p}_1)^2(x,y) \leq \widehat t \}$ for some $\widehat t> 0$ such that $|\widehat S|=\frac{1-\alpha}{\beta-\alpha}\,|\Omega|$.
	 	\end{proposition}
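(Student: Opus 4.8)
The plan is to characterize the first torsional eigenvalue through the constrained Rayleigh quotient
$$
\nu_1(p)=\min_{u\in H^2_\mathcal{O}(\Omega)\setminus\{0\}}\frac{\|u\|_{H^2_*}^2}{\int_\Omega p\,u^2\,dxdy},
$$
which is legitimate because the $y$-symmetry of $p$ decouples the spectrum into its longitudinal and torsional parts (as recalled after \eqref{decomposition}) and because the embedding $H^2_*(\Omega)\hookrightarrow L^2(\Omega)$ is compact, so the minimum is attained by some $\theta^{p}_1\in H^2_\mathcal{O}(\Omega)$ normalized as in \eqref{normalization}. First I would settle existence. Taking a maximizing sequence $p_n$ for \eqref{CP2}, the uniform bound $\alpha\le p_n\le\beta$ yields, up to a subsequence, $p_n\weakly^{*}\widehat p$ in $L^\infty(\Omega)$; the pointwise bounds, the $y$-symmetry and the mass normalization defining $\mathcal{P}_{L^\infty}^{\alpha,\beta}$ are all preserved under weak-$*$ convergence, so $\widehat p\in\mathcal{P}_{L^\infty}^{\alpha,\beta}$. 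For each fixed $u\in H^2_\mathcal{O}(\Omega)$ one has $u\in C^0(\overline\Omega)$, hence $u^2\in L^1(\Omega)$ and $\int_\Omega p_n u^2\to\int_\Omega\widehat p\,u^2$; since the denominators stay bounded below by $\alpha\|u\|_2^2>0$, the map $p\mapsto\|u\|_{H^2_*}^2/\int_\Omega p\,u^2$ is weak-$*$ continuous, and $\nu_1(\cdot)$, being an infimum of such maps, is weak-$*$ upper semicontinuous. Therefore $\nu_1(\widehat p)\ge\limsup_n\nu_1(p_n)=\nu^{\alpha,\beta}_1$, so $\widehat p$ is optimal, and coupling it with a minimizer $\theta^{\widehat p}_1$ of its Rayleigh quotient produces the optimal pair.

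The structural part is the heart of the proof. I would exploit that $p\mapsto 1/\nu_1(p)=\max_u\int_\Omega p\,u^2/\|u\|_{H^2_*}^2$ is a supremum of affine functions of $p$, hence convex, and test optimality along the convex combination $p_s=(1-s)\widehat p+s\,q\in\mathcal{P}_{L^\infty}^{\alpha,\beta}$ for an arbitrary competitor $q\in\mathcal{P}_{L^\infty}^{\alpha,\beta}$. Since $\widehat p$ minimizes $1/\nu_1$, the right derivative at $s=0$ is nonnegative; evaluating it by a Danskin/Hadamard perturbation formula for a simple eigenvalue gives
$$
0\le\frac{d}{ds}\Big|_{s=0^{+}}\frac{1}{\nu_1(p_s)}=\frac{1}{\nu_1(\widehat p)}\int_\Omega(q-\widehat p)\,(\theta^{\widehat p}_1)^2\,dxdy,
$$
so that $\widehat p$ minimizes $p\mapsto\int_\Omega p\,(\theta^{\widehat p}_1)^2\,dxdy$ over $\mathcal{P}_{L^\infty}^{\alpha,\beta}$. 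The bathtub (rearrangement) principle then forces the minimizer to place the maximal density $\beta$ where $(\theta^{\widehat p}_1)^2$ is smallest and the minimal density $\alpha$ where it is largest, i.e. $\widehat p=\beta$ on $\widehat S=\{(\theta^{\widehat p}_1)^2\le\widehat t\}$ and $\widehat p=\alpha$ on $\Omega\setminus\widehat S$; the mass constraint $\int_\Omega\widehat p\,dxdy=|\Omega|$ then fixes $|\widehat S|=\frac{1-\alpha}{\beta-\alpha}|\Omega|$. Since $\theta^{\widehat p}_1$ is odd in $y$, $(\theta^{\widehat p}_1)^2$ is even, so $\widehat S$ is automatically $y$-symmetric and the symmetry constraint on $p$ is not active.

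The two delicate points, which I expect to be the main obstacles, both live in the last paragraph. First, the perturbation formula requires $\nu_1(\widehat p)$ to be simple, or otherwise a subdifferential argument ranging over the whole first torsional eigenspace; establishing simplicity (or circumventing it) is the most technical step. Second, the bang-bang conclusion is clean only if the level set $\{(\theta^{\widehat p}_1)^2=\widehat t\}$ is Lebesgue-negligible, which guarantees uniqueness of the bathtub minimizer and excludes intermediate values of $\widehat p$; this would follow from a unique-continuation property for the fourth-order eigenfunction, showing that $\theta^{\widehat p}_1$ cannot be constant on a set of positive measure while solving $\Delta^2\theta=\nu_1(\widehat p)\,\widehat p\,\theta$ with $\widehat t>0$.
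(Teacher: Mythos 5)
A preliminary remark: this paper never proves Proposition \ref{thm-nu} at all --- the statement is imported verbatim from \cite{befa} (hence the citation in its header) --- so your attempt can only be compared with the strategy of that reference, which is the classical Krein/Cox--McLaughlin scheme for maximizing a first eigenvalue over a class of densities: weak-$*$ compactness plus upper semicontinuity for existence, convexity of $p\mapsto 1/\nu_1(p)$ together with a bathtub argument for the bang-bang structure. Your outline follows exactly this scheme, and its existence half is sound: restricting the Rayleigh quotient to $H^2_\mathcal{O}(\Omega)$ is justified by the spectral decoupling induced by the $y$-symmetry of $p$, the constraints defining $\mathcal{P}_{L^\infty}^{\alpha,\beta}$ are stable under weak-$*$ limits, and upper semicontinuity of $\nu_1$ is all that a supremum requires.

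The structural half, however, is not yet a proof: the two issues you set aside at the end are precisely where the work lies, and the first one, left unresolved, leaves the stated conclusion unproved. If $\nu_1(\widehat p)$ is degenerate, the right derivative of the convex map $s\mapsto 1/\nu_1\bigl((1-s)\widehat p+s q\bigr)$ at $s=0^+$ is not $\frac{1}{\nu_1(\widehat p)}\int_\Omega (q-\widehat p)(\theta^{\widehat p}_1)^2\,dxdy$ but the maximum of such expressions over the whole normalized first eigenspace; equivalently, the correct optimality condition $0\in\partial(1/\nu_1)(\widehat p)+N_{\mathcal{P}}(\widehat p)$ only produces a convex combination $\xi=\sum_i c_i\,\theta_i^2$ of squared eigenfunctions such that $\widehat p$ minimizes $p\mapsto\int_\Omega p\,\xi\,dxdy$. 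The bathtub principle then makes $\widehat p$ bang-bang with respect to the sublevel sets of $\xi$, not of a single $(\theta^{\widehat p}_1)^2$, which is strictly weaker than the proposition. You must therefore either prove simplicity of the first torsional eigenvalue of the optimal density --- nontrivial here, since for fourth-order operators the positivity/Krein--Rutman route to simplicity is unavailable --- or show that the subgradient realizing the optimality condition can be taken to be a single squared eigenfunction. The second issue is genuine but fixable by a standard argument you could have included: by interior elliptic regularity $\theta^{\widehat p}_1\in H^4_{\mathrm{loc}}(\Omega)$, and on a positive-measure subset of $\{\theta^{\widehat p}_1=c\}$ all derivatives up to order four vanish a.e., so the equation yields $0=\Delta^2\theta^{\widehat p}_1=\nu_1(\widehat p)\,\widehat p\,c$ there, forcing $c=0$ since $\widehat p\ge\alpha>0$; hence every level set $\{(\theta^{\widehat p}_1)^2=t\}$ with $t>0$ is null, the map $t\mapsto|\{(\theta^{\widehat p}_1)^2\le t\}|$ is continuous on $(0,\infty)$, and a level $\widehat t>0$ with $|\widehat S|=\frac{1-\alpha}{\beta-\alpha}|\Omega|$ exists, with no intermediate values of $\widehat p$ permitted on the critical level set. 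Note, though, that this computation says nothing when $c=0$: to guarantee $\widehat t>0$ one still needs the nodal set of $\theta^{\widehat p}_1$ to be null, i.e.\ a unique-continuation-from-positive-measure property, which is an additional, not-for-free ingredient that your sketch only gestures at.
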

	 	Since we do not know explicitly $\theta^{\widehat p}_1$, the function $\theta^{\widehat p}_1$ is replaced by the torsional eigenfunction $\theta^1_1(x,y)$ of \eqref{weighteig} with $p\equiv1$, i.e. an eigenfunction corresponding to $\nu_1(1)$. This is explicitly known, see \cite{fergaz}; for details on this choice see \cite{befa}. Therefore we consider
	 	$$
	 	p^*(x,y):= \beta \chi_{S^*}(x,y)+ \alpha \chi_{\Omega \setminus S^*}(x,y) \,\quad \text{for a.e. }(x,y)\in \Omega\,,
	 	$$
	 	where $S^* := \{ (x,y)\in \Omega\,:\, (\theta^1_{1})^2(x,y) \leq t^* \}$ for $t^*> 0$ such that $|S^*|=\frac{1-\alpha}{\beta-\alpha}|\Omega|$.
	 	\vspace{2mm}
	 	\item \fbox{$\breve p(y)$}\hfill\includegraphics[scale=0.22]{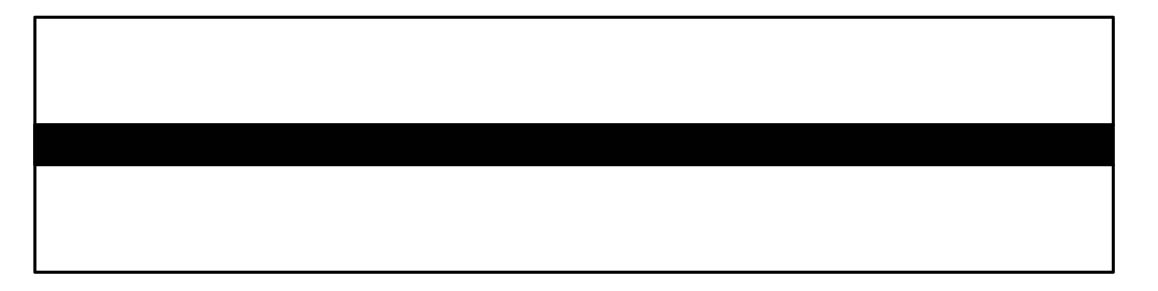}\vspace{2mm} \\
	 	In order to find a reinforce more suitable for manufacturing, inspired by $p^*(x,y)$, we consider a weight depending only on $y$ and concentrated around the mid-line $y=0$, i.e. 
	 	\begin{equation*}
	 	\breve p(x,y)=\breve p(y):=\beta \chi_{\breve I}(y)+\alpha \chi_{(-\ell,\ell)\setminus \breve I}(y)\,\quad \text{for a.e. } (x,y)\in \Omega\,,
	 	\end{equation*}
	 	where $\breve I:=\big(-\frac{\ell(\beta-1)}{\beta-\alpha},\frac{\ell(\beta-1)}{\beta-\alpha}\big)$.
	 	\vspace{2mm}
	 	\item\fbox{$\overline p_{i}(x)$, $i\in \mathbb{N}^+$}\hfill\includegraphics[scale=0.22]{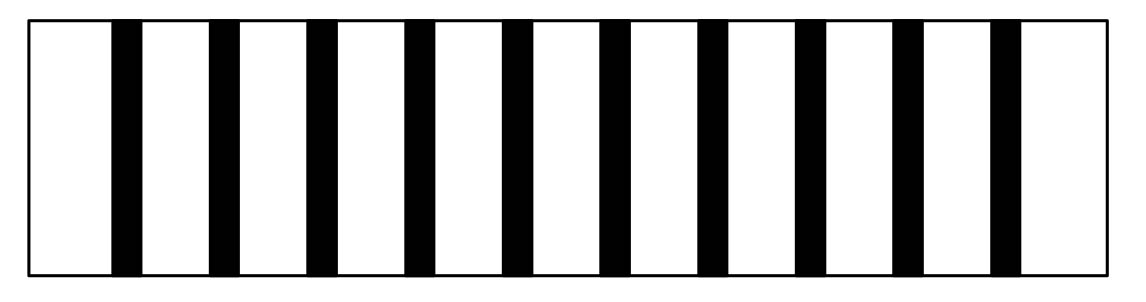}\vspace{2mm} \\
	 	The reasons of this choice are quite involved. We give here only the main idea and for details we refer to \cite{befa}.\par  
	 	For $i\in \N^+$, we set the minimum problem
	 	\begin{equation}\label{CP}
	 	\mu^{\alpha,\beta}_i :=\inf_{p \in\mathcal{P}_{L^\infty}^{\alpha,\beta}} \, \mu_i(p)\,,
	 	\end{equation}
	 	where $\mu_i(p)$ is the $i$-th longitudinal eigenvalue of \eq{weighteig}.
	 	We call \textit{optimal pair} for \eq{CP} a couple $(\overline{p}_i,z^{\overline p_i}_{i})$ such that $\overline{p}_i$ achieves the infimum in \eqref{CP} and $z^{\overline p_i}_{i}$ is an eigenfunction of $\mu_i(\overline{p}_i)$.
	 	In \cite[Theorem 3.2]{befafega} the following result is proved.
	 	
	 	\begin{proposition}\label{thm-mu1} \cite{befafega}
	 		Set $i=1$, then problem \eqref{CP} admits an optimal pair $(\overline{p}_1,z^{\overline p_1}_{1}) \in \mathcal{P}_{L^\infty}^{\alpha,\beta}\times H^2_\mathcal{E}(\Omega)$.
	 		Furthermore, $z^{\overline p_1}_{1} $ and $\overline p_1$ are related as follows
	 		\begin{equation*}\label{p1}
	 		\overline{p}_1(x,y) = \alpha \chi_{ S_1} (x,y)+ \beta \chi_{\Omega \setminus S_1}(x,y)\,\quad \text{for a.e. } (x,y)\in \Omega\,,
	 		\end{equation*}
	 		where $ S_1 = \{ (x,y)\in \Omega\,:\,(z^{\overline p_1}_{1})^2(x,y) \leq t_1 \}$ for some $ t_1> 0$ such that $| S_1|=\frac{\beta-1}{\beta-\alpha}\,|\Omega|$.
	 	\end{proposition}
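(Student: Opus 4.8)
The plan is to recast \eqref{CP} through the Rayleigh--quotient characterization of the first longitudinal eigenvalue, so that it becomes a double minimization whose inner problem over the density is \emph{linear} and can be solved explicitly by a rearrangement argument. First I would recall that for $p\in\mathcal{P}_{L^\infty}^{\alpha,\beta}$, since the bilinear form respects the splitting \eqref{decomposition} (because $p$ is $y$-even), the bottom of the longitudinal spectrum is
$$
\mu_1(p)=\min_{\substack{u\in H^2_\mathcal{E}(\Omega)\\ u\neq 0}}\frac{\|u\|_{H^2_*}^2}{\int_\Omega p\,u^2\,dxdy},
$$
which is well defined because $\int_\Omega p\,u^2\geq\alpha\|u\|_2^2>0$ for $u\neq 0$. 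Consequently $\mu_1^{\alpha,\beta}$ is a genuine double infimum over the product $\mathcal{P}_{L^\infty}^{\alpha,\beta}\times(H^2_\mathcal{E}(\Omega)\setminus\{0\})$, and the two minimizations may be performed in either order.

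\textbf{Existence (direct method).} I would take a minimizing sequence $p_n$ with normalized eigenfunctions $z_n=z^{p_n}_1$, so that $\int_\Omega p_n z_n^2=1$ and $\|z_n\|_{H^2_*}^2=\mu_1(p_n)\to\mu_1^{\alpha,\beta}$. The uniform bounds $\alpha\leq p_n\leq\beta$ give weak-$*$ compactness in $L^\infty$: up to a subsequence $p_n\weakly^* \overline p_1$, and $\overline p_1\in\mathcal{P}_{L^\infty}^{\alpha,\beta}$ since the pointwise bounds, the symmetry, and the linear mass constraint all survive the weak-$*$ limit. The $z_n$ are bounded in $H^2_*$, hence (up to a further subsequence) $z_n\weakly z$ in $H^2_*(\Omega)$ and, by the compact embedding $H^2_*(\Omega)\hookrightarrow L^2(\Omega)$ for $\Omega\subset\R^2$ bounded, $z_n\to z$ strongly in $L^2$, with $z\in H^2_\mathcal{E}(\Omega)$ (a closed subspace). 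Writing $\int_\Omega p_n z_n^2=\int_\Omega p_n(z_n^2-z^2)+\int_\Omega p_n z^2$, the first term vanishes because $|p_n|\leq\beta$ and $z_n^2\to z^2$ in $L^1$, while the second converges to $\int_\Omega\overline p_1 z^2$ by weak-$*$ convergence tested against $z^2\in L^1$; hence $\int_\Omega\overline p_1 z^2=1$ and $z\neq 0$. Weak lower semicontinuity of the norm then gives $\mu_1(\overline p_1)\leq\|z\|_{H^2_*}^2\leq\liminf\|z_n\|_{H^2_*}^2=\mu_1^{\alpha,\beta}$, forcing equality, so $(\overline p_1,z)$ is an optimal pair with $z=z^{\overline p_1}_1$.

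\textbf{Bang--bang structure.} With $(\overline p_1,z)$ optimal, for every $\widetilde p\in\mathcal{P}_{L^\infty}^{\alpha,\beta}$ the quotient bounds $\mu_1(\widetilde p)\leq\|z\|_{H^2_*}^2/\int_\Omega\widetilde p\,z^2$, while $\mu_1(\overline p_1)=\|z\|_{H^2_*}^2/\int_\Omega\overline p_1 z^2$; optimality $\mu_1(\overline p_1)\leq\mu_1(\widetilde p)$ then yields $\int_\Omega\overline p_1 z^2\geq\int_\Omega\widetilde p\,z^2$. Thus $\overline p_1$ \emph{maximizes} the linear functional $p\mapsto\int_\Omega p\,z^2$ over $\mathcal{P}_{L^\infty}^{\alpha,\beta}$, which is a bathtub/rearrangement problem: to maximize under $\alpha\leq p\leq\beta$ and $\int_\Omega p=|\Omega|$ one sets $p=\beta$ where $z^2$ is largest and $p=\alpha$ where $z^2$ is smallest, i.e. $\overline p_1=\alpha\,\chi_{S_1}+\beta\,\chi_{\Omega\setminus S_1}$ with $S_1=\{z^2\leq t_1\}$. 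Since $z\in H^2_\mathcal{E}(\Omega)$, $z^2$ is even in $y$, so $\overline p_1$ automatically inherits the required symmetry, and the mass constraint $\alpha|S_1|+\beta(|\Omega|-|S_1|)=|\Omega|$ gives exactly $|S_1|=\frac{\beta-1}{\beta-\alpha}|\Omega|$.

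\textbf{Main obstacle.} The hard part will be the \emph{sharp} bang--bang conclusion on the level set $\{z^2=t_1\}$: if it had positive measure, the maximization would leave $\overline p_1$ undetermined there, and one could not write the clean representation above. I would rule this out using that $z$ solves the equation: by elliptic regularity $\Delta^2 z=\mu_1(\overline p_1)\,\overline p_1 z\in L^\infty(\Omega)$ gives $z\in H^4_{\mathrm{loc}}(\Omega)$, and on any level set $\{z=c\}$ of positive measure one has $\nabla z=0$ a.e.; iterating the fact that the gradient of an $H^1$ function vanishes a.e. on each of its level sets yields $D^2z=0$ and hence $\Delta^2 z=0$ a.e. there. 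For $c=\pm\sqrt{t_1}\neq 0$ this contradicts $\Delta^2 z=\mu_1(\overline p_1)\,\overline p_1\,c\neq 0$ (recall $\mu_1>0$ and $\overline p_1\geq\alpha>0$), so $|\{z^2=t_1\}|=0$ and the representation holds unambiguously. The same measure-zero-level-set mechanism, together with a unique-continuation argument showing that the nodal set $\{z=0\}$ is negligible, is what forces $t_1>0$ once $|S_1|>0$ is known. This regularity and real-analysis step is the genuine work; the remaining min--min reformulation and rearrangement are by now standard.
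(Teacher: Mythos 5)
A preliminary remark: the paper you were given never proves Proposition \ref{thm-mu1} at all; it is imported verbatim from \cite[Theorem 3.2]{befafega}, so there is no internal proof to compare against, and your attempt can only be measured against the strategy of that reference, which it essentially reproduces (Rayleigh--quotient reformulation, direct method, linearization plus bathtub rearrangement, measure-theoretic analysis of level sets). Within that strategy your steps are sound: since $p$ is $y$-even both quadratic forms respect the splitting \eqref{decomposition}, so indeed $\mu_1(p)=\min\{\|u\|_{H^2_*}^2/\int_\Omega p\,u^2\,dxdy:\ u\in H^2_\mathcal{E}(\Omega)\setminus\{0\}\}$; the weak-$*$ closedness of $\mathcal{P}_{L^\infty}^{\alpha,\beta}$, the decomposition $\int_\Omega p_n z_n^2=\int_\Omega p_n(z_n^2-z^2)+\int_\Omega p_n z^2$ to carry the normalization to the limit, and weak lower semicontinuity correctly produce an optimal pair; the comparison of Rayleigh quotients correctly shows that $\overline p_1$ maximizes the linear functional $p\mapsto\int_\Omega p\,z^2\,dxdy$; and the iterated Stampacchia argument (all derivatives up to order four of an $H^4_{\mathrm{loc}}$ function vanish a.e.\ on each level set, contradicting $\Delta^2 z=\mu_1(\overline p_1)\,\overline p_1\,c\neq0$ when $c\neq 0$) correctly excludes level sets $\{z^2=t\}$, $t>0$, of positive measure.

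The genuine gap is the step you flag but do not close: the nodal set $\{z=0\}$. If $|\{z=0\}|\geq\frac{\beta-1}{\beta-\alpha}|\Omega|$, the bathtub level degenerates to $t_1=0$; the linear functional is then blind to the values of $p$ on $\{z=0\}$, so maximizers need not even be bang-bang there, and the stated representation with $S_1$ a sublevel set and $t_1>0$ is no longer forced. Your contradiction mechanism is powerless on this set, because there both sides of $\Delta^2 z=\mu_1(\overline p_1)\,\overline p_1\,z$ vanish a.e., so no inconsistency arises. You appeal to ``a unique-continuation argument,'' but unique continuation from sets of positive measure is essentially second-order technology (Carleman estimates plus Lebesgue density points); for a fourth-order operator with an $L^\infty$ potential and these mixed hinged/free boundary conditions it is not an off-the-shelf result, and you neither prove it nor cite a precise statement covering $\Delta^2$. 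This is precisely the point where the source reference has to invest dedicated work on qualitative (sign/simplicity) properties of the first longitudinal eigenfunction, a delicate matter for fourth-order problems since maximum principles are unavailable. As written, your argument determines $\overline p_1$ only off the nodal set and yields the representation with some $t_1\geq 0$; completing the proof requires either showing $|\{z=0\}|=0$ (e.g.\ via positivity of the first eigenfunction) or supplying a valid unique-continuation theorem for this operator.
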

	 	
	 	Things become more involved for higher longitudinal eigenvalues and we do not find an analytical expression as for $i=1$. Focusing on upper bounds for $\mu_i(p)$, see \cite{befa}, we propose the following approximated optimal weight for $\mu_i^{\alpha, \beta}$:
	 	\begin{equation*}\label{pj}
	 	\overline p_i(x,y)=\overline p_i(x):=\beta \chi_{I_i}(x)+\alpha \chi_{(0,\pi)\setminus I_i}(x)\\,\quad \text{for a.e. } (x,y)\in \Omega\,,
	 	\end{equation*}
	 	where $I_i:=\displaystyle{\bigcup_{h=1}^i}\bigg(\frac{\pi}{2i}(2h-1)-\frac{\pi}{i} \frac{(1-\alpha)}{2(\beta-\alpha)},\,\frac{\pi}{2i}(2h-1)+\frac{\pi}{i} \frac{(1-\alpha)}{2(\beta-\alpha)}\bigg)$.
	 	\vspace{2mm}
	 	\item\fbox{$\overline{\overline p}(x)$}\hfill\includegraphics[scale=0.22]{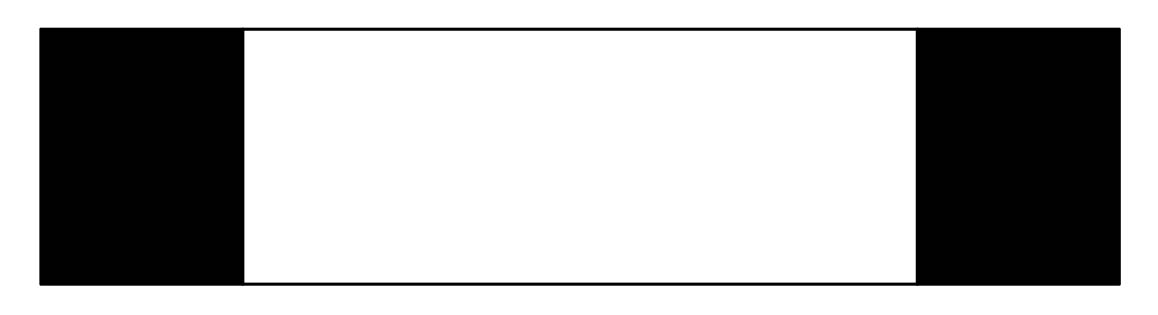}\vspace{2mm} \\
	 	We consider a weight concentrated near the short edges of the plate:
	 	$$\overline{\overline p}(x,y)=\overline{\overline p}(x):=\alpha \chi_{I}(x)+\beta \chi_{(0,\pi)\setminus I}(x) \,\quad \text{for a.e. }(x,y)\in \Omega\,,$$
	 	where $I:=\big(\frac{\pi}{2}-\frac{\pi(\beta-1)}{2(\beta-\alpha)},\frac{\pi}{2}+\frac{\pi(\beta-1)}{2(\beta-\alpha)}\big).$
	 	This weight seems to be simple for manufacturing and reasonable in order to increase $\mathcal{R}$.
	 \end{enumerate}
 We denote by 
 $$
 \widehat P_{\alpha,\beta}:=\{p\in \mathcal{P}_{L^\infty}^{\alpha,\beta}: p(x,y)\text{ coincides with } 1\text{ or } p^*(x,y)\text{ or } \breve p(y) \text{ or } \overline p_{10}(x) \text{ or }\overline{\overline p}(x) \quad \forall(x,y)\in\Omega\};
 $$
 we shall explain in the next section why we are interested in $\overline p_{10}(x)$ in the fourth case.
 \section{$L^2(\Omega)$ external forcing terms}\label{L2sec}
 When $f\in \mathcal{F}_{L^2}$ it is possible to obtain more information on the solution of \eq{eigenweak} and, in turn, on the gap function. In this case we expand $u$ in Fourier series, adopting an orthonormal basis of $L^2_p$ composed by the eigenfunctions of \eq{weighteig}. In Section \ref{proof} we prove the following result. 
 \begin{proposition}\label{gap}
 	For $m\in\mathbb{N}^+$,	we denote by $\nu_{m}(p)$ and $\mu_m(p)$ the eigenvalues of \eq{weighteig} and, respectively, $\theta^p_{m}(x,y)$ and $z^p_{m}(x,y)$ the corresponding normalized eigenfunctions, see \eqref{normalization}.\par
 	If $f\in \mathcal{F}_{L^2}$  and $p\in \mathcal{P}_{L^\infty}^{\alpha,\beta}$ then the unique solution of \eq{eigenweak} reads
 	\begin{equation}\label{fourieru}
 	u(x,y)=\sum_{m=1}^{\infty}\left[\dfrac{a_m}{\nu_m(p)}\theta^p_{m}(x,y)+\dfrac{b_m}{\mu_m(p)}z^p_{m}(x,y)\right]
 	\end{equation}
 	and 
 	\begin{equation}\label{Gap}
 	\mathcal{G}_{f,p}(x)=2\sum\limits_{m=1}^\infty\dfrac{a_m}{\nu_m(p)}\theta^p_m(x,\ell)\qquad \forall x\in[0,\pi],
 	\end{equation}
 	where 
 	$$
 	a_m:=\int_{\Omega} pf\,\theta^p_{m}\,dxdy\qquad b_m:=\int_{\Omega} pf\,z^p_{m}\,dxdy.
 	$$
 	If $f\in \mathcal{F}_{L^2}$  and $f(x,-y)=-f(x,y)$ a.e. in $\Omega$ then $u(x,y)=\sum\limits_{m=1}^{\infty}\dfrac{a_m}{\nu_m(p)}\theta^p_{m}(x,y)$.
 \end{proposition}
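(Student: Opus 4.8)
The plan is to use the spectral decomposition associated with the weighted eigenvalue problem \eqref{weighteig} and then read off both \eqref{fourieru} and \eqref{Gap} by testing the weak formulation \eqref{eigenweak} against the eigenfunctions. Note first that since $f\in\mathcal{F}_{L^2}$ falls under Proposition \ref{uniqueness}-$i)$ (with $q=2$), the dual product in \eqref{eigenweak} is the integral $\int_\Omega pf\,v\,dxdy$, and the solution $u$ exists, is unique, and lies in $H^2_*(\Omega)\subset C^0(\overline\Omega)$.

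First I would set up the spectral framework. The solution operator of \eqref{weighteig}, sending $g\in L^2_p(\Omega)$ to the unique $u$ solving $(u,v)_{H^2_*}=\int_\Omega p\,g\,v\,dxdy$, is compact (via the embedding $H^2_*(\Omega)\hookrightarrow\hookrightarrow L^2(\Omega)$) and self-adjoint with respect to the weighted scalar product $\langle u,v\rangle_p:=\int_\Omega p\,uv\,dxdy$. Hence its eigenfunctions form a complete orthonormal basis of $L^2_p(\Omega)$. Because $p$ is even in $y$, problem \eqref{weighteig} respects the orthogonal splitting \eqref{decomposition}, so each eigenfunction may be chosen purely even ($z^p_m\in H^2_\mathcal{E}(\Omega)$) or purely odd ($\theta^p_m\in H^2_\mathcal{O}(\Omega)$). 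Therefore the combined family $\{\theta^p_m\}_m\cup\{z^p_m\}_m$, normalized as in \eqref{normalization}, is a complete orthonormal system in $L^2_p(\Omega)$; moreover, distinct eigenfunctions are orthogonal also in $H^2_*(\Omega)$, and $\{\theta^p_m/\sqrt{\nu_m(p)}\}_m\cup\{z^p_m/\sqrt{\mu_m(p)}\}_m$ is a complete orthonormal system in $H^2_*(\Omega)$.

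Next I would compute the Fourier coefficients. Writing $u=\sum_m c_m\theta^p_m+\sum_m d_m z^p_m$ in $L^2_p(\Omega)$, I test \eqref{eigenweak} with $v=\theta^p_m$ and use the weak eigenvalue relation $(\theta^p_m,u)_{H^2_*}=\nu_m(p)\langle\theta^p_m,u\rangle_p$ together with the symmetry of the bilinear form to obtain $a_m=(u,\theta^p_m)_{H^2_*}=\nu_m(p)\,c_m$, whence $c_m=a_m/\nu_m(p)$; analogously $d_m=b_m/\mu_m(p)$. This yields \eqref{fourieru}, with the series converging in $H^2_*(\Omega)$ since $\sum_m a_m^2/\nu_m(p)+\sum_m b_m^2/\mu_m(p)=\|u\|_{H^2_*}^2<\infty$ by Parseval in the $H^2_*$-orthonormal basis above.

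Finally I would extract the gap function. Convergence in $H^2_*(\Omega)$ together with the embedding $H^2_*(\Omega)\hookrightarrow C^0(\overline\Omega)$ gives uniform convergence, which legitimizes evaluating \eqref{fourieru} termwise at $y=\pm\ell$. Using the parities $\theta^p_m(x,-y)=-\theta^p_m(x,y)$ and $z^p_m(x,-y)=z^p_m(x,y)$, the longitudinal terms cancel in $u(x,\ell)-u(x,-\ell)$ while each torsional term doubles, producing \eqref{Gap}. For the last assertion, if $f$ is odd in $y$ then $pf$ is odd (as $p$ is even), so $b_m=\int_\Omega pf\,z^p_m\,dxdy=0$ for every $m$ by parity of $z^p_m$, and the expansion collapses to the stated torsional series. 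The main obstacle is precisely this interchange of the pointwise trace evaluation at $y=\pm\ell$ with the infinite sum: it is where one must rely on $H^2_*$-convergence upgraded to uniform convergence via the Sobolev embedding, since mere $L^2_p$-convergence would not justify a pointwise identity.
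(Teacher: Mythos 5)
Your proof is correct, and its overall skeleton (expand in the weighted eigenfunction basis, identify the coefficients by testing \eqref{eigenweak} against $\theta^p_m$ and $z^p_m$ via the weak eigenvalue relations) coincides with the paper's. Where you genuinely diverge is in the justification that the series \eqref{fourieru} converges in $H^2_*(\Omega)$: you exploit the fact that $u$ is \emph{already known} to lie in $H^2_*(\Omega)$ by Proposition \ref{uniqueness}, so once $\bigl\{\theta^p_m/\sqrt{\nu_m(p)},\,z^p_m/\sqrt{\mu_m(p)}\bigr\}_m$ is recognized as a complete orthonormal system of $H^2_*(\Omega)$, Parseval's identity gives $\sum_m a_m^2/\nu_m(p)+b_m^2/\mu_m(p)=\|u\|^2_{H^2_*}<\infty$ for free, and $H^2_*$-convergence (hence, by the embedding into $C^0(\overline\Omega)$, uniform convergence and the legitimacy of evaluating traces at $y=\pm\ell$) follows from abstract Hilbert space theory. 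The paper instead proves the $\ell^2$-summability of the rescaled coefficients by hand: it invokes the variational characterization of the eigenvalues, the stability inequality $\lambda_m(1)/\beta\le\lambda_m(p)\le\lambda_m(1)/\alpha$, and the explicit lower bound $\lambda_m(1)>(1-\sigma)^2m^4$ from \cite[Theorem 7.6]{fergaz}, obtaining $|a_m|/\sqrt{\nu_m(p)}\le \beta/((1-\sigma)m^2)$ and comparison with $\sum_m m^{-4}$. Your route is shorter and needs no spectral asymptotics; the paper's route buys a quantitative decay rate for the Fourier coefficients, which is exactly what underpins the truncation of the series at $N=30$ in the numerical experiments of Section \ref{L2sec}, so the extra work is not wasted there. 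Both arguments rest on the same unproved-but-standard facts (completeness of the eigenfunction system in $L^2_p(\Omega)$ and the even/odd splitting of eigenfunctions coming from the symmetry of $p$), which you justify slightly more explicitly via the compact self-adjoint solution operator.
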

 
Driven by Proposition \ref{symteo}, we shall consider $y$-odd forcing terms; in \cite{antunes} the authors conjectured as worst forcing term 
 $$
 f_0(x,y)=
 \begin{cases}
 1\quad &y\in[0,\ell]\\
 -1\quad &y\in[-\ell,0).
 \end{cases}
 $$
 Since $\|f_0\|_2=\sqrt{|\Omega|}$ and we are interested in $f\in\mathcal{F}_{L^2}$, we normalize $f_0$, i.e.
  $$
\overline f_0(x,y)=
 \begin{cases}
 \dfrac{1}{\sqrt{|\Omega|}}\quad &y\in[0,\ell]\\
 \dfrac{-1}{\sqrt{|\Omega|}}\quad &y\in[-\ell,0).
 \end{cases}
 $$
We refer to Table \ref{tab1} for numerical results about $\overline f_0$.\par 
  A physical interesting case is when $f$ is in resonance with the structure, i.e. when $f$ is a multiple of an eigenfunction of \eq{weighteig}. The case in which $f$ is proportional to a longitudinal mode is not interesting from our point of view since the gap function vanishes. Hence, we consider $f$ proportional to the $j$-th torsional mode, i.e.
 $$
 f_j(x,y)=\theta^p_j(x,y);
 $$ 
since $\|f_j\|_2\neq 1$, we consider $\overline f_j(x,y)=\theta^p_j(x,y)/\|\theta^p_j\|_2$ so that $\overline f_j\in\mathcal{F}_{L^2}$ for all $j\in \mathbb{N}^+$.
 Trough Proposition \ref{gap}, we readily obtain
 $$a_m=\begin{cases}
 1/\|\theta^p_j\|_2\quad &m=j\\
 0\quad&m\neq j
 \end{cases}\qquad\quad  u(x,y)=\dfrac{\theta_j^p(x,y)}{\nu_j(p)\|\theta^p_j\|_2}\qquad\quad  \mathcal{G}_{f_j,p}(x)=2\dfrac{\theta_j^p(x,\ell)}{\nu_j(p)\|\theta^p_j\|_2}.$$
 
 We provide now some numerical results considering a narrow plate, as it may be the deck of a suspension bridge, composed by typical materials adopted for these structures, i.e.
 \begin{equation}\label{numpar}
 	\ell=\dfrac{\pi}{150}\qquad \sigma=0.2,
 \end{equation} 
 for details see \cite{bfg,crfaga,falocchi}. We point out that with these parameters the eigenvalues of the homogeneous plate ($p\equiv 1$) are ordered in the following sequence $$\mu_1(1)<...<\mu_{10}(1)<\nu_{1}(1)<\mu_{11}(1)<...$$
 Hence, the longitudinal eigenvalue closest to the first torsional from below is $\mu_{10}(1)$; for this reason we consider $p\in\widehat P_{\alpha,\beta}$ fixing $i=10$ for the fourth reinforce $\overline p_{10}$ proposed in Section \ref{weightsec}. 
  	\begin{table}[h]\centering
 	\scalebox{0.9}{	\begin{tabular}{|c||c|c|c|c|c|}
 			\hline
 			&$p\equiv1$&$p^*(x,y)$&$\breve p(y)$&$\overline p_{10}(x)$&$\overline{\overline p}(x)$\\
 			&\includegraphics[scale=0.22]{caso1.jpg}&\includegraphics[scale=0.22]{caso3.jpg}&\includegraphics[scale=0.22]{caso4.jpg}&\includegraphics[scale=0.22]{caso2.jpg}&\includegraphics[scale=0.22]{caso5.jpg}\\
 			\hline
 			\hline
 			$\nu_1(p)\cdot 10^{-4}$&1.09&\bf 1.98&1.75&1.09&1.56\\
 			\hline
 			$\nu_2(p)\cdot 10^{-4}$&4.38& 6.88&\bf7.01&4.37&4.14\\
 			\hline
 			\hline
 			$\mathcal{G}^\infty_{\overline f_0,p}\cdot10^{4}$&9.32&\bf 6.09&6.99&9.32&7.00\\
 			\hline
 			$\mathcal{G}^\infty_{\overline f_1,p}\cdot10^{4}$&1.23$\cdot10$&\bf6.74&7.71&1.23$\cdot10$&8.21\\
 			\hline
 			$\mathcal{G}^\infty_{\overline f_2,p}\cdot10^4$&3.08&\bf1.93&\bf 1.93&3.11&3.38\\
 			\hline
 	\end{tabular}}
 	
 	\vspace{3mm}
 	\caption{The first torsional weighted eigenvalues $\nu_{1}(p)$, $\nu_{2}(p)$ and $\mathcal{G}^\infty_{f,p}$ defined in \eqref{gapmax}, assuming \eqref{numpar}-\eqref{alphabeta} and $N=30$.}
 	\label{tab1}
 \end{table}
 On the choice of the values $0<\alpha<1<\beta$ related to the family $\mathcal{P}_{L^\infty}^{\alpha,\beta}$, for the applicative purpose we may strengthen the plate with steel and  we may consider the other material composed by a mixture of steel and concrete; therefore, the denser material has approximately triple density with respect to the weaker. Thus, we assume 
 \begin{equation}\label{alphabeta}
\alpha=0.5\qquad \beta=1.5.
 \end{equation}
 \begin{figure}[!hbt]
 	\centering
 	{\includegraphics[width=8.1cm]{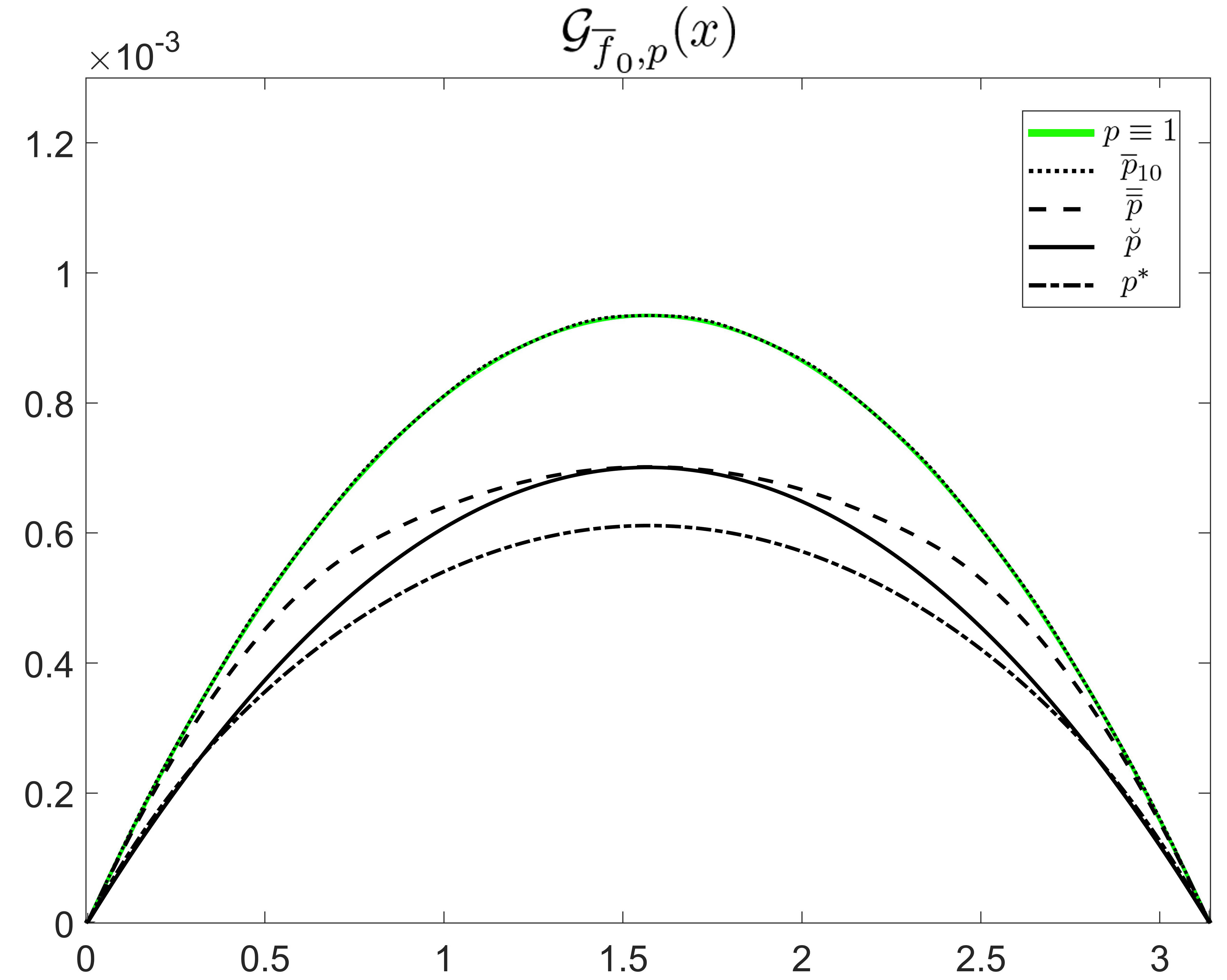}}\hspace{7mm}{\includegraphics[width=8cm]{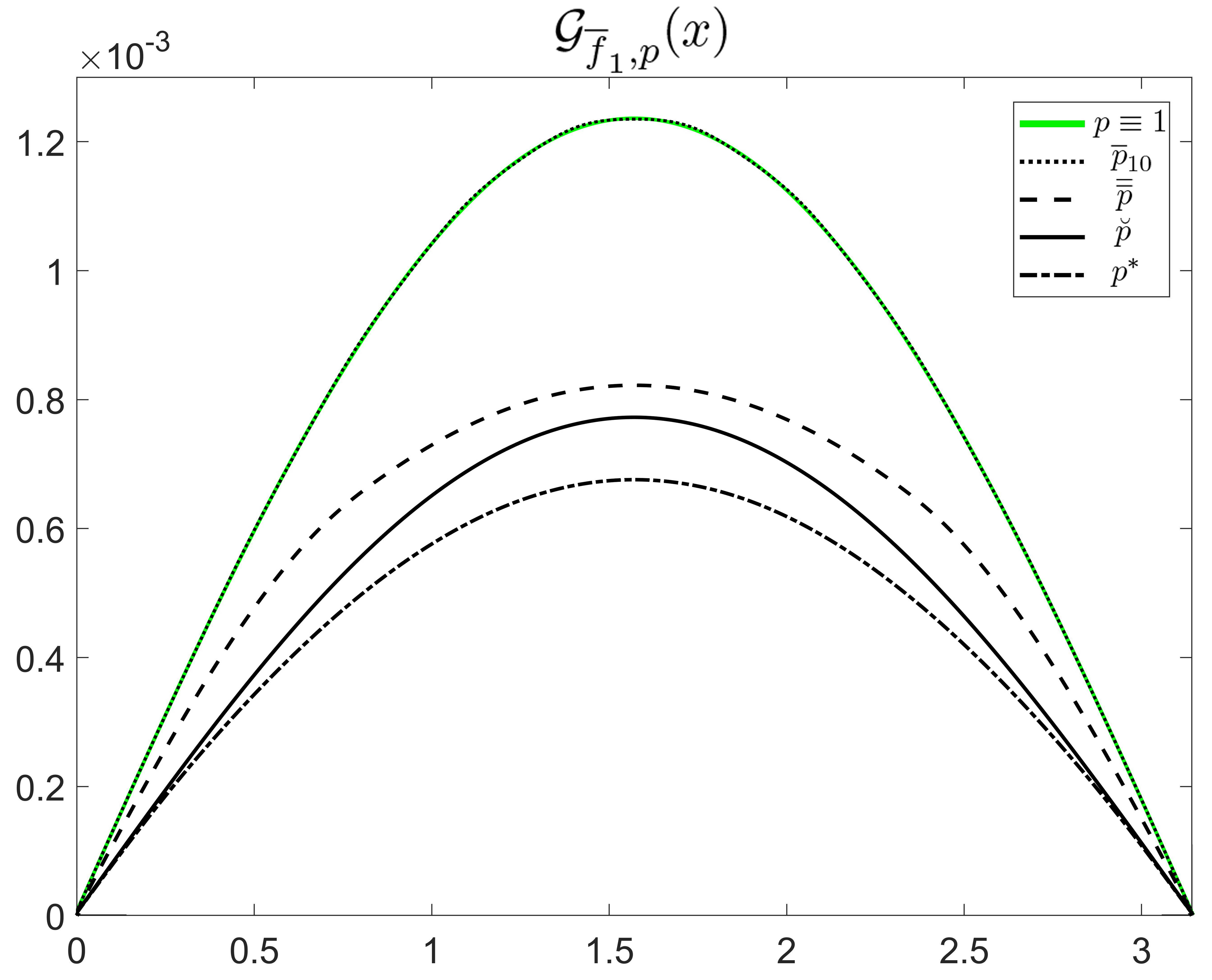}}
 	\caption{Plots of the gap functions $\mathcal{G}_{\overline f_0,p}(x)$ and $\mathcal{G}_{\overline f_1,p}(x)$ for $x\in[0,\pi]$, varying $p$, assuming \eqref{numpar}-\eqref{alphabeta} and $N=30$.}
 	\label{fig2}
 \end{figure}  
The numerical computation of the gap function in \eqref{Gap} is obtained truncating the Fourier series at a certain $N\geq1$, integer; we compute the weighted eigenvalues and eigenfunctions of \eqref{weighteig}, exploiting the explicit information we have in the case $p\equiv 1$, see \cite{fergaz}, and adopting the same numerical procedure described in \cite{befa}.\par

In Table \ref{tab1} we present the maximum values assumed by the gap function with respect to the choice of $f\in\mathcal{F}_{L^2}$ and $p\in\widehat{P}_{\alpha,\beta}$; as one can expect, for $f=\overline f_0$ the absolute maximum is always attained in $x=\pi/2$, while for $f=\overline f_j$ is assumed where $\sin(jx)$ has stationary points; indeed, $\theta^p_j(x,\pm\ell)$ is qualitatively similar to $\pm A\sin(jx)$ ($A\in\mathbb{R}^+$, $j\in\mathbb{N}^+$), see Figure \ref{fig2}.

\begin{figure}[!hbt]
	\centering
	{\includegraphics[width=7.5cm]{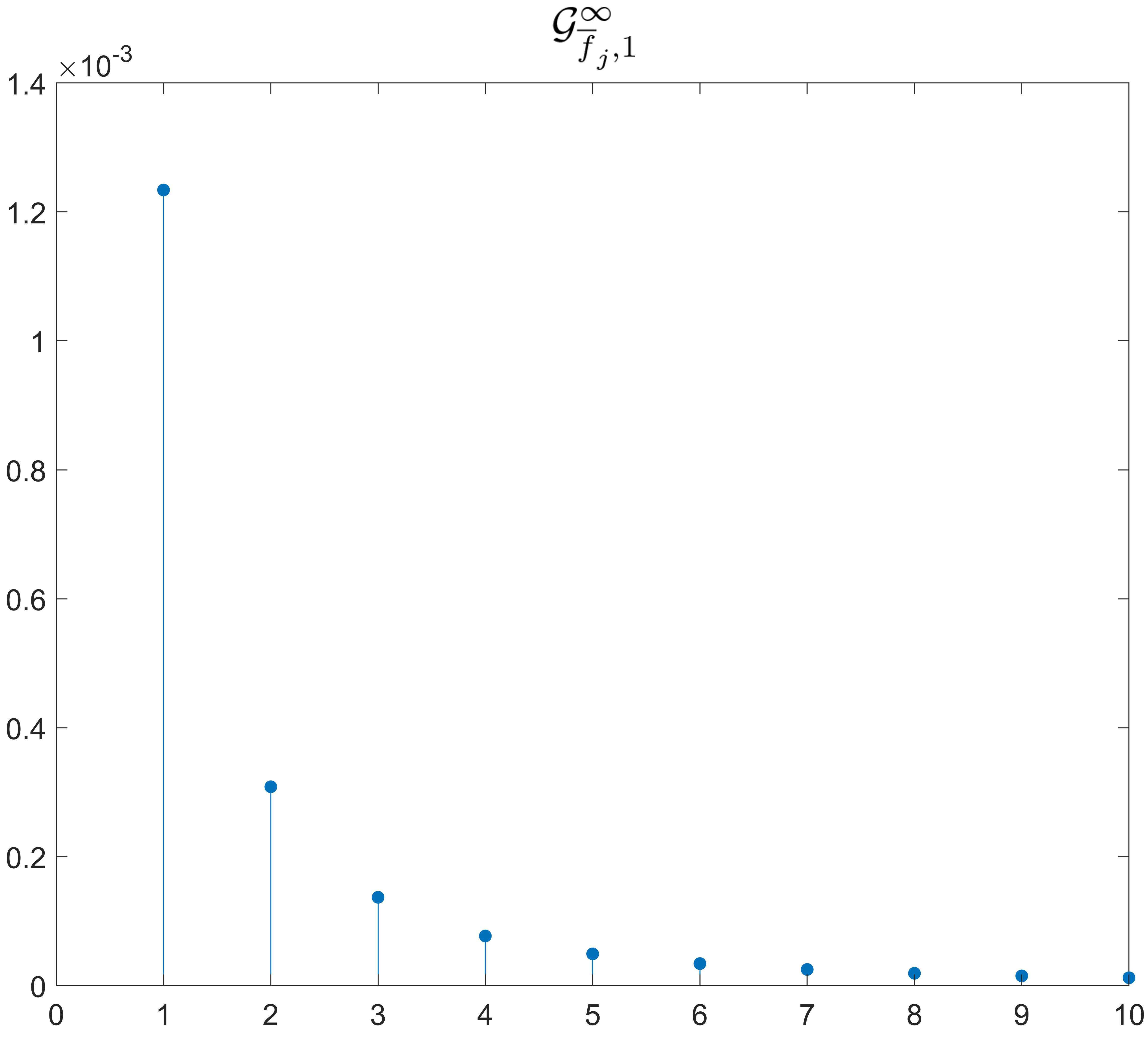}}\hspace{10mm}{\includegraphics[width=7.5cm]{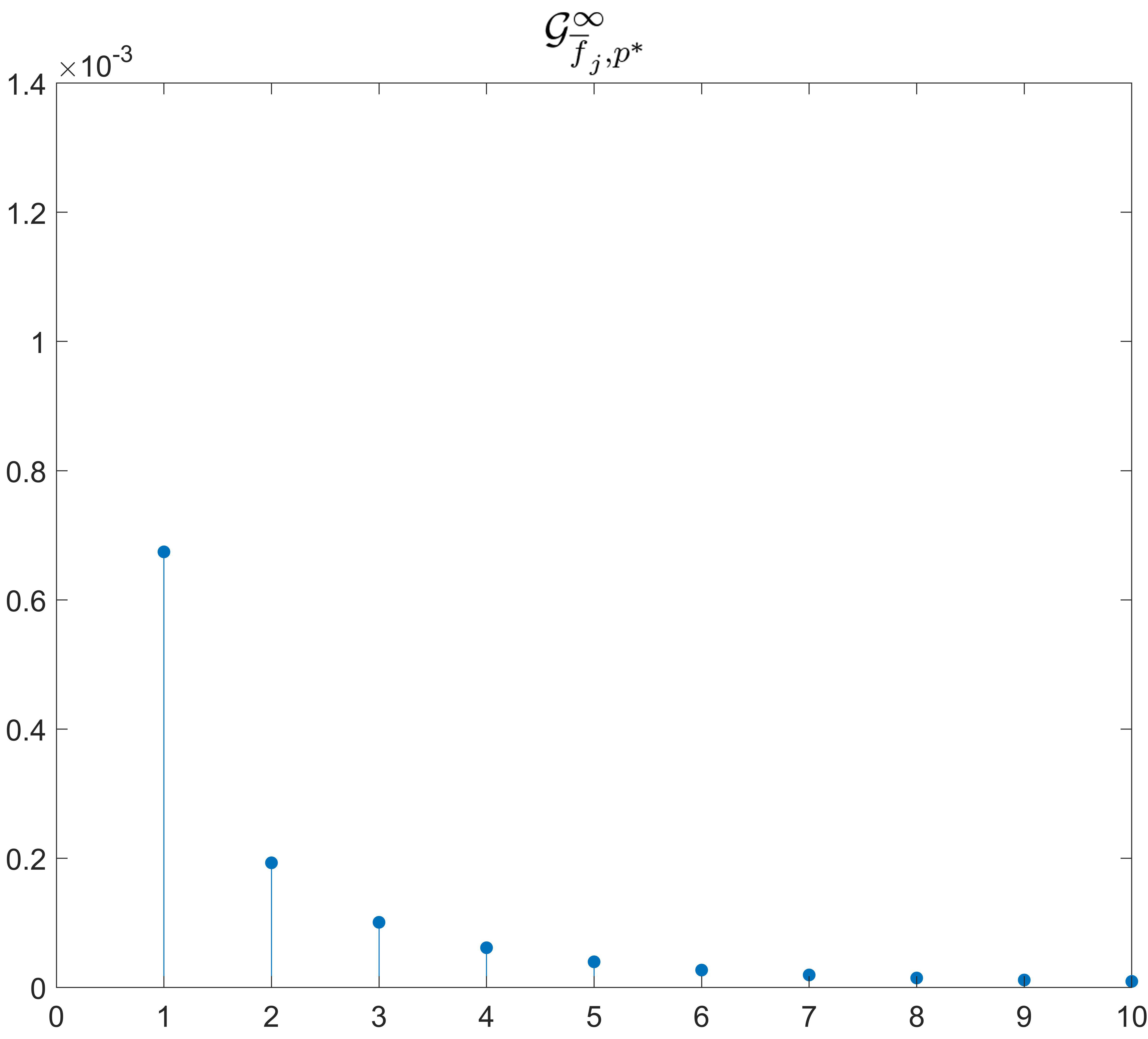}}
	\caption{Plots of $j\mapsto \mathcal{G}^\infty_{\overline f_j,1}$ and $j\mapsto \mathcal{G}^\infty_{\overline f_j,p^*}$, assuming \eqref{numpar}-\eqref{alphabeta} and $N=30$.	}
	\label{fig1}
\end{figure} 

 In Figure \ref{fig1} we plot $j\mapsto \mathcal{G}^\infty_{\overline f_j,p}$ when the plate is homogeneous and $p=p^*$; through this result we conjecture that the gap function reduces in amplitude when $\overline f_j$ is in resonance with higher torsional modes.
 
The choice to strengthen the plate with densities like $\overline p_i(x)$ ($i\in\mathbb{N}^+$) needs some remarks. In this paper we considered only the case $\overline p_{10}(x)$, because it is emblematic for all $\overline p_i(x)$; indeed, the values of $\overline p_{10}(x)$ in Table \ref{tab1} are very similar to those related to $\overline p_i(x)$ with $i=4,\dots, 15$, hence we do not show them. We point out that these reinforces are thought to reduce the $i$-th longitudinal eigenvalue, see \cite{befa}. From our analysis we observe that they are not so useful in modifying the torsional eigenvalues and in lowering the gap function; this is confirmed also by Figure \ref{fig2} where the gap function related to $\overline p_{10}(x)$ is very close to the gap function of the homogeneous plate. Numerically we observe  that this trend is less and less remarked as we increase the size of $\ell$ with respect to \eqref{numpar}. Hence, for $\ell\gg \frac{\pi}{150}$, e.g. $\ell=\frac{\pi}{15}$, it is possible that weights as $\overline p_i(x)$ ($i\in\mathbb{N}^+$) play a role in the torsional performance of the plate, but this overcomes our applicative purposes.

 The worst situation among the tested external forces appears when $f=\overline f_1$ followed by $f=\overline f_0$; this suggests that the forces $f\in\mathcal{F}_{L^2}$ which maintain the same (and opposite) sign along the two free edges of the plate seem to be the candidate solutions of \eqref{max}. Among the weight considered, the possible optimal reinforces of \eqref{min} are $p^*(x,y)$ or $\breve p(y)$, see Figure \ref{fig2}. The weight $p^*(x,y)$ provides very good results for our aims, while $\breve p(y)$ is more suitable to maximize the second torsional eigenvalue; this is also confirmed by the value of $\mathcal{G}^\infty_{\overline f_2,p}$, i.e. the maximum of the gap function when $f$ is in resonance with the second torsional weighted eigenfunction. 
 In general, this agrees with the results obtained in \cite{befa}, in which the problem is dealt with a different point of view, based on the maximization of the eigenvalues ratio $\mathcal{R}$ in \eqref{opt_intro}.

 \section{Proofs}\label{proof}
 \subsection{Proof of Theorem \ref{existence}}
Fixed $p\in\mathcal{P}_{W}^{\alpha,\beta}$ with $0<\alpha<1<\beta$, we prove the continuity of the map $f\mapsto\mathcal{G}^\infty_{f,p}$ in the following lemma.

\begin{lemma}\label{continuityV}
	Let $(V,W)$ the couple of functional spaces defined respectively in \eqref{casi}-i) or in \eqref{casi}-ii).
	The map $\mathcal{G}^\infty_{f,p}:V\rightarrow [0,+\infty)$ is continuous when $V$ is endowed with the weak* topology.
\end{lemma}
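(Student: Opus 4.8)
The plan is to factor $\mathcal{G}^\infty_{f,p}$ through the linear solution operator and then exploit the compact embedding $H^2_*(\Omega)\hookrightarrow C^0(\overline\Omega)$. Let $L:V\to H^2_*(\Omega)$ denote the solution map $f\mapsto u$ provided by Proposition \ref{uniqueness}; by Lax--Milgram it is linear and bounded. Taking traces on the free edges $y=\pm\ell$, the assignment $f\mapsto \mathcal{G}_{f,p}=(Lf)(\cdot,\ell)-(Lf)(\cdot,-\ell)$ is a bounded linear map from $V$ into $C^0([0,\pi])$, and $\mathcal{G}^\infty_{f,p}=\|\mathcal{G}_{f,p}\|_{C^0([0,\pi])}$. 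Since the sup-norm is continuous, it suffices to prove that $f\mapsto\mathcal{G}_{f,p}$ is continuous from $(V,\text{weak*})$ into $(C^0([0,\pi]),\|\cdot\|_\infty)$. Because the map is used in Theorem \ref{existence} only on the weak*-compact unit ball, and because on bounded subsets of $V$ the weak* topology is metrizable (the relevant predual, namely $L^{q'}(\Omega)$ in case \eqref{casi}-$i)$ and $H^2_*(\Omega)$ in case \eqref{casi}-$ii)$, is separable), I will argue sequentially.

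So I take $f_n\overset{*}{\weakly}f$ in $V$ and set $u_n=Lf_n$, $u=Lf$; the first step is to pass to the limit in the weak formulation \eqref{eigenweak}. In case \eqref{casi}-$i)$ the right-hand side is $\int_\Omega pf_n v\,dxdy$: since $p\in L^\infty(\Omega)$ and $v\in H^2_*(\Omega)\hookrightarrow C^0(\overline\Omega)\subset L^{q'}(\Omega)$, the test object $pv$ belongs to $L^{q'}(\Omega)$, and weak* convergence in $L^q(\Omega)$ yields $\int_\Omega pf_n v\to\int_\Omega pfv$ for each fixed $v$. In case \eqref{casi}-$ii)$ the right-hand side is $T_{f_n}(pv)=\langle f_n,pv\rangle$; the Banach-algebra property recorded after \eqref{defdual} guarantees $pv\in H^2_*(\Omega)$, so weak* convergence in $H^{-2}_*(\Omega)$ gives $\langle f_n,pv\rangle\to\langle f,pv\rangle$. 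In both regimes $(u_n,v)_{H^2_*}\to\langle pf,v\rangle=(u,v)_{H^2_*}$ for every $v\in H^2_*(\Omega)$. Combined with the uniform bound on $\|u_n\|_{H^2_*}$ (the weak* convergent sequence $f_n$ is bounded by the uniform boundedness principle, and $L$ is bounded), this identifies the limit and gives $u_n\weakly u$ weakly in $H^2_*(\Omega)$.

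The second step upgrades weak convergence to uniform convergence. Since $\Omega\subset\R^2$, the embedding $H^2_*(\Omega)\hookrightarrow C^0(\overline\Omega)$ is \emph{compact} (it factors through $C^{0,\gamma}(\overline\Omega)$ and one applies Arzel\`a--Ascoli), and a compact operator sends weakly convergent sequences to norm-convergent ones; hence $u_n\to u$ in $C^0(\overline\Omega)$. Restricting to the free edges, the traces converge uniformly on $[0,\pi]$, so $\mathcal{G}_{f_n,p}\to\mathcal{G}_{f,p}$ in $C^0([0,\pi])$ and therefore $\mathcal{G}^\infty_{f_n,p}=\|\mathcal{G}_{f_n,p}\|_\infty\to\|\mathcal{G}_{f,p}\|_\infty=\mathcal{G}^\infty_{f,p}$, which is the claim.

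The main obstacle I anticipate lies in the first step, namely passing to the limit in the pairing $\langle pf_n,v\rangle$ uniformly across the two regimes. In case \eqref{casi}-$ii)$ one must know that $pv$ is an admissible test element, which is precisely why $p$ is required to lie in $\mathcal{P}_{H^2}^{\alpha,\beta}$ and why the Banach-algebra estimate is invoked; in case \eqref{casi}-$i)$ with $q=\infty$ one must be careful that only weak* (not weak) convergence is available, so the multiplier $pv$ has to be tested in the predual $L^{1}(\Omega)$. Once these pairings are controlled, the compactness of the embedding and the continuity of the sup-norm make the remaining passages routine.
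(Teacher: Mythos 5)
Your proof is correct and follows essentially the same route as the paper's: take a weak* convergent sequence $f_n$, pass to the limit in the weak formulation \eqref{eigenweak} (using that $pv$ lies in the predual of $V$ in each regime, via the Banach-algebra property in case \eqref{casi}-$ii)$), identify the limit with the unique solution, and upgrade weak convergence in $H^2_*(\Omega)$ to uniform convergence through the compact embedding $H^2_*(\Omega)\hookrightarrow C^0(\overline\Omega)$, so that the gap functions and their sup-norms converge. The only cosmetic differences are that you derive the uniform bound on $\|u_n\|_{H^2_*}$ from boundedness of the solution operator plus the uniform boundedness principle, where the paper simply tests the equation with $v=u_n$, and that you add a (welcome, and absent from the paper) remark on the metrizability of the weak* topology on bounded sets to justify arguing sequentially.
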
 
\begin{proof}
		Let $\{f_n\}_n\subset V$ be such that $f_n\overset{\ast}{\rightharpoonup} f$ in $V$ for $n\rightarrow +\infty$. Denoting by $u_n$ the solution of \eq{eigenweak} corresponding to $f_n$, we have 
	\begin{equation}\label{weaknV}
	(u_n,v)_{H^2_*} =\langle pf_n,v\rangle \qquad\forall v\in H^2_*(\Omega);
	\end{equation}
	since $f_n\overset{\ast}{\rightharpoonup} f$ in $V$, its $V$ norm is bounded, then the above equality with $v=u_n\in H^2_*(\Omega)\subset C^0(\overline\Omega)$ gives respectively in the cases \eqref{casi}-i) and \eqref{casi}-ii)
	\begin{equation}
	\begin{split}
	&i)\qquad\|u_n\|^2_{H^2_*}=\bigg|\int_{\Omega}f_n\,pu_n\,dxdy\bigg|\leq \beta\int_{\Omega}|f_nu_n|\,dxdy\leq \beta\|f_n\|_q\|u_n\|_{q'}\leq C_3\|u_n\|_{H^2_*},\\
	&ii)\qquad\|u_n\|^2_{H^2_*}=\big|\langle pf_n,u_n\rangle\big|=\big|\langle f_n,pu_n\rangle\big|\leq  \|f_n\|_{H^{-2}_*}\|pu_n\|_{H^2_*}\leq C_4\|u_n\|_{H^2_*},
	\end{split}
	\end{equation}
in which  in the last inequality  we used that $H^2_*(\Omega)$ is a Banach algebra.
Therefore $\|u_n\|_{H^2_*}\leq C$ for some $ C>0$; thus we obtain, up to a subsequence, $u_n\rightharpoonup \overline u$ in $H^2_*(\Omega)$. Denoting by $V'$ the dual space of $V$, we get $pv\in V'$; hence we pass to the limit \eq{weaknV}
	\begin{equation*}
	(\overline u,v)_{H^2_*} =\langle f,pv\rangle \qquad\forall v\in H^2_*(\Omega),
	\end{equation*}
	obtaining by the uniqueness that $\overline u$ is the weak solution of \eqref{eigenweak}.\par 
	The embedding $H^2_*(\Omega)\subset C^0(\overline \Omega)$ is compact, therefore $u_n\rightarrow \overline u$ in $C^0(\overline \Omega)$, implying that the gap function $\mathcal{G}_{f_n,p}(x)$ converges uniformly to $\mathcal{G}_{f,p}(x)$ as $n\rightarrow+\infty$ for all $x\in[0,\pi]$. Therefore $\mathcal{G}^\infty_{f_n,p}\rightarrow \mathcal{G}^\infty_{f,p}$ as $n\rightarrow +\infty$.
\end{proof}
\begin{proof}[Proof of Theorem \ref{existence} completed]
	Let $p\in \mathcal{P}_W^{\alpha,\beta}$ fixed and $\{f_n\}\subset \mathcal{F}_{V}$ a maximizing sequence for \eq{max}; 
	since $\|f_n\|_{V}=1$, we have, up to a subsequence, $f_n\overset{\ast}{\rightharpoonup}\overline f$ in $V$. By the lower semi continuity of the norms we have $\|\overline f\|_V\leq \|f_n\|_V=1$. Through Lemma \ref{continuityV} we obtain $$\max\limits_{f\in\mathcal{F}_{V}}\mathcal{G}^\infty_{f,p}=\mathcal{G}^\infty_{\overline f,p};$$ we prove that $\|\overline f\|_V=1$.  For contradiction we suppose $\|\overline f\|_{V}<1$; hence, we set $\widehat{f}=\overline f/\|\overline f\|_{V}$ and by linearity we obtain $\mathcal{G}^\infty_{\widehat{f},p}=\mathcal{G}^\infty_{\overline f,p}/\|\overline f\|_{V}>\mathcal{G}^\infty_{\overline f,p}$. This is absurd.
\end{proof}

\subsection{Proof of Theorem \ref{existence2}}
In the proof we shall use the compactness of the set $\mathcal{P}_{W}^{\alpha,\beta}$; if $W=L^\infty(\Omega)$ the set $\mathcal{P}_{L^\infty}^{\alpha,\beta}$ is compact for the $L^\infty$ weak* topology, see \cite[Lemma 5.2]{befa}. If $W=H^2(\Omega)$ we prove the following result.
\begin{lemma}\label{compactness}
	The set $\mathcal{P}_{H^2}^{\alpha,\beta}$ with $0<\alpha<1<\beta$ is compact for the $H^2$ weak topology.
\end{lemma}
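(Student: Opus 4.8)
The plan is to show that $\mathcal{P}_{H^2}^{\alpha,\beta}$ is sequentially compact for the $H^2$ weak topology, which suffices since $H^2(\Omega)$ is a reflexive, separable Hilbert space. So I would start with an arbitrary sequence $\{p_n\}_n\subset\mathcal{P}_{H^2}^{\alpha,\beta}$ and use the uniform bound $\|p_n\|_{H^2}\leq\kappa\sqrt{|\Omega|}$ built into the definition of the family: this bounded sequence admits, up to a subsequence, a weak limit $p_n\weakly \overline p$ in $H^2(\Omega)$. The whole task is then to verify that $\overline p$ again belongs to $\mathcal{P}_{H^2}^{\alpha,\beta}$, i.e. that each of the defining constraints passes to the weak limit.

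Next I would check the constraints one by one. The norm bound $\|\overline p\|_{H^2}\leq\kappa\sqrt{|\Omega|}$ follows immediately from the weak lower semicontinuity of the $H^2$ norm. For the remaining constraints I would exploit the compact embedding $H^2(\Omega)\hookrightarrow\hookrightarrow L^2(\Omega)$ (Rellich--Kondrachov, valid since $\Omega\subset\R^2$ is a bounded Lipschitz domain), so that along the same subsequence $p_n\to\overline p$ strongly in $L^2(\Omega)$ and hence, up to a further subsequence, pointwise a.e. in $\Omega$. The pointwise bounds $\alpha\leq p_n\leq\beta$ then transfer to $\alpha\leq\overline p\leq\beta$ a.e., and the symmetry $p_n(x,y)=p_n(x,-y)$ is preserved in the a.e. pointwise limit. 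Finally the mass constraint $\int_\Omega p_n\,dxdy=|\Omega|$ passes to the limit because $p_n\weakly\overline p$ in $H^2$ (indeed already $p_n\to\overline p$ in $L^2$) implies convergence of the integrals against the constant test function $1\in L^2(\Omega)$, giving $\int_\Omega\overline p\,dxdy=|\Omega|$.

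Putting these together, $\overline p$ satisfies all the conditions defining $\mathcal{P}_{L^\infty}^{\alpha,\beta}$ together with the $H^2$ norm bound, hence $\overline p\in\mathcal{P}_{H^2}^{\alpha,\beta}$, which establishes the claimed compactness. The main obstacle, and the only point requiring genuine care, is the interplay between the weak $H^2$ topology (in which we want closedness) and the strong/pointwise convergence needed to handle the \emph{pointwise} and \emph{a.e.} constraints $\alpha\leq p\leq\beta$ and the symmetry: these are not weakly closed conditions in an obvious way, so the compact embedding into $L^2$ is the essential tool that converts weak $H^2$ convergence into the a.e. convergence that makes the pointwise constraints survive. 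One should be slightly careful that the passage to pointwise a.e. convergence costs a further subsequence, but since we are only proving sequential compactness this is harmless.
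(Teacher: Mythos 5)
Your proof is correct and follows essentially the same route as the paper's: extract a weakly convergent subsequence from the uniform $H^2$ bound, obtain the norm constraint by weak lower semicontinuity, and use a compact embedding to pass the pointwise constraints and the mass constraint to the limit. The only (harmless) difference is that the paper upgrades weak $H^2$ convergence to \emph{uniform} convergence via the compact embedding $H^2(\Omega)\subset C^0(\overline\Omega)$, valid since $\Omega\subset\R^2$, whereas you use the compact embedding into $L^2(\Omega)$ plus an a.e.-convergent further subsequence; both devices handle the a.e. constraints equally well.
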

\begin{proof}
	Let $\{ p_n \}_n \subset \mathcal{P}_{H^2}^{\alpha,\beta}$,
	then by definition $\|p_n\|_{H^2}\leq\kappa\sqrt{|\Omega|}$, hence, up to a subsequence, we have $p_n\rightharpoonup \overline p$ in $H^2(\Omega)$ (as $n\rightarrow +\infty$) for some $\overline p\in H^2(\Omega)$ and $$\|\overline p\|_{H^2}\leq \liminf\limits_{n\rightarrow+\infty}\|p_n\|_{H^2}\leq\kappa\sqrt{|\Omega|}; $$ due to the compact embedding $H^2(\Omega) \subset C^0(\overline\Omega)$, we obtain $
	p_n\rightarrow \overline p$ uniformly as $n \rightarrow \infty$.  This implies $\alpha\leq \overline p\leq\beta$ and $\overline p(x,-y)=\overline p(x,y)$ for all $(x,y)\in \Omega$; moreover, passing the limit under the integral, we obtain $|\Omega|=\int_{\Omega}p_n\,\,dx\,dy\rightarrow\int_{\Omega}\overline p\,dx\,dy$, implying $\int_{\Omega}\overline p\,dx\,dy=|\Omega|$.  \par  
	Therefore the limit point $\overline p\in\mathcal{P}_{H^2}^{\alpha,\beta}$ and $\mathcal{P}_{H^2}^{\alpha,\beta}$ is compact for the $H^2$ weak topology.	  
\end{proof}

Fixed $f\in\mathcal{F}_{V}$, we endow the spaces 
\begin{equation}\label{topology}
\begin{split}
&i)\qquad L^\infty(\Omega) \quad\text{with the weak* topology},\\
&ii)\qquad H^2(\Omega) \quad\text{with the weak topology}
\end{split}
\end{equation}
and we prove the continuity of the map $p\mapsto\mathcal{G}^\infty_{p}$ in the next lemma.
\begin{lemma}\label{continuityLp2}
		Let $(V,W)$ the couple of functional spaces defined respectively in \eqref{casi}-i) or in \eqref{casi}-ii).
	The map $\mathcal{G}^\infty_{p}:\mathcal{P}_{W}^{\alpha,\beta}\rightarrow [0,+\infty)$ is continuous when $W$ is endowed with the proper topology in \eqref{topology}.
\end{lemma}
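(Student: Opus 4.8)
The plan is to mimic the structure of the proof of Lemma~\ref{continuityV}, but with the roles of $f$ and $p$ exchanged: here $f\in\mathcal{F}_V$ is fixed and it is the weight $p$ that varies in $\mathcal{P}_W^{\alpha,\beta}$. First I would take a sequence $\{p_n\}_n\subset\mathcal{P}_W^{\alpha,\beta}$ converging to $\overline p$ in the proper topology of \eqref{topology}; in case $i)$ this is $L^\infty$ weak$^*$ convergence and in case $ii)$ it is $H^2$ weak convergence, which by the compact embedding $H^2(\Omega)\subset C^0(\overline\Omega)$ upgrades to uniform convergence $p_n\to\overline p$ (as already exploited in Lemma~\ref{compactness}). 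I denote by $u_n\in H^2_*(\Omega)$ the solution of \eqref{eigenweak} associated with the pair $(f,p_n)$, and by $\overline u$ the solution associated with $(f,\overline p)$, both of which exist and are unique by Proposition~\ref{uniqueness}.

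The core step is to show $u_n\rightharpoonup\overline u$ in $H^2_*(\Omega)$. I would first obtain a uniform bound on $\|u_n\|_{H^2_*}$ exactly as in Lemma~\ref{continuityV}: testing \eqref{eigenweak} with $v=u_n$ gives in case $i)$ the estimate $\|u_n\|_{H^2_*}^2\le\beta\|f\|_q\|u_n\|_{q'}\le C\|u_n\|_{H^2_*}$ (using $\alpha\le p_n\le\beta$ and the continuous embedding $H^2_*(\Omega)\hookrightarrow L^{q'}(\Omega)$), and in case $ii)$ the estimate $\|u_n\|_{H^2_*}^2=|\langle f,p_nu_n\rangle|\le\|f\|_{H^{-2}_*}\|p_nu_n\|_{H^2_*}\le C\|u_n\|_{H^2_*}$, where the Banach-algebra bound $\|p_nu_n\|_{H^2_*}\le K\|p_n\|_{H^2_*}\|u_n\|_{H^2_*}$ is uniform because $\|p_n\|_{H^2_*}\le\kappa\sqrt{|\Omega|}$ is bounded by the definition of $\mathcal{P}_{H^2}^{\alpha,\beta}$. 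Hence, up to a subsequence, $u_n\rightharpoonup\widetilde u$ in $H^2_*(\Omega)$. Then I pass to the limit in $(u_n,v)_{H^2_*}=\langle p_nf,v\rangle$ for each fixed $v\in H^2_*(\Omega)$: the left side converges by weak convergence, and the right side converges to $\langle\overline pf,v\rangle$, so by uniqueness $\widetilde u=\overline u$, and a standard subsequence argument gives convergence of the full sequence.

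The delicate point---and the step I expect to be the main obstacle---is the convergence of the right-hand side $\langle p_nf,v\rangle\to\langle\overline pf,v\rangle$, because the product $p_nf$ involves the limit of a product of two sequences (well, one fixed factor and one varying weight) in a weak topology, where products are not in general continuous. In case $i)$ one writes $\langle p_nf,v\rangle=\int_\Omega fv\,p_n\,dxdy$; since $f\in L^q$ and $v\in L^{q'}$ (indeed $v\in C^0(\overline\Omega)$) we have $fv\in L^1(\Omega)$, so testing the $L^\infty$ weak$^*$ convergence $p_n\overset{*}{\rightharpoonup}\overline p$ against the fixed $L^1$ function $fv$ yields the claim directly. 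In case $ii)$ one uses \eqref{defdual} to write $\langle p_nf,v\rangle=T_f(p_nv)=\langle f,p_nv\rangle$, so it suffices to show $p_nv\to\overline pv$ strongly in $H^2_*(\Omega)$ (then pair against the fixed functional $f\in H^{-2}_*$); this follows because $p_n\to\overline p$ uniformly together with the analogous convergence of derivatives coming from $H^2$ weak convergence, combined with the Banach-algebra product estimate---one shows $\|(p_n-\overline p)v\|_{H^2_*}\le K\|p_n-\overline p\|_{H^2_*}\|v\|_{H^2_*}$ is not quite enough since it only uses weak convergence of $p_n-\overline p$, so instead one argues by the compact embedding to get strong $C^0$ convergence of $p_n$ and its lower-order derivatives and handles the top-order term via the weak convergence paired against the fixed $f$. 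Once $u_n\rightharpoonup\overline u$ is established, I conclude as in Lemma~\ref{continuityV}: the compact embedding $H^2_*(\Omega)\subset C^0(\overline\Omega)$ upgrades weak $H^2_*$ convergence to strong uniform convergence $u_n\to\overline u$ in $C^0(\overline\Omega)$, whence $\mathcal{G}_{f,p_n}(x)\to\mathcal{G}_{f,\overline p}(x)$ uniformly on $[0,\pi]$ and therefore $\mathcal{G}^\infty_{p_n}\to\mathcal{G}^\infty_{\overline p}$, proving continuity.
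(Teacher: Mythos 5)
Your proof follows the same strategy as the paper's: test the weak formulation with $v=u_n$ to get a uniform $H^2_*$ bound (using $p_n\le\beta$ in case $i)$, the Banach-algebra property and the bound $\|p_n\|_{H^2}\le\kappa\sqrt{|\Omega|}$ in case $ii)$), extract a weak limit, identify it by uniqueness with the solution for the limit weight, and conclude via the compact embedding $H^2_*(\Omega)\subset C^0(\overline\Omega)$ that the gap functions converge uniformly. The only difference is that you spell out the convergence $\langle p_n f,v\rangle\to\langle \overline{p} f,v\rangle$, which the paper leaves implicit, and your eventual resolution in case $ii)$ --- weak $H^2_*$ convergence of $p_nv$ suffices when paired against the fixed functional $f$ --- is the correct one, even though your initial detour through strong convergence of $p_nv$ (which is neither available nor needed) makes that passage read more tangled than necessary.
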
 
\begin{proof}
Let $\{p_n\}_n\subset \mathcal{P}_{W}^{\alpha,\beta}$ be such that
\begin{equation*}
\begin{split}
i)\qquad &\text{if } \hspace{1mm} W=L^\infty(\Omega)\qquad p_n\overset{\ast}{\rightharpoonup} p \qquad\text{  in  }\quad L^\infty(\Omega)\\
ii) \qquad &\text{if } \hspace{1mm} W=H^2(\Omega)\qquad\, p_n\rightharpoonup p\qquad\text{ in } \quad H^2(\Omega)
\end{split}	
\end{equation*} 
for $n\rightarrow +\infty$; since $\mathcal{P}_{W}^{\alpha,\beta}$ is compact for the respective topology \eqref{topology}, then $p\in \mathcal{P}_{W}^{\alpha,\beta}$.\par 
	We denote by $u_n$ the solution of \eq{eigenweak} corresponding to $p_n$ and we get 
	\begin{equation}\label{weakn2}
	(u_n,v)_{H^2_*} =\langle p_nf,v\rangle \qquad\forall v\in H^2_*(\Omega);
	\end{equation}
 the above equality with $v=u_n\in H^2_*(\Omega)\subset C^0(\overline\Omega)$ gives respectively in the cases \eqref{casi}-i) and \eqref{casi}-ii)
 \begin{equation}
 \begin{split}
 &i)\qquad\	\|u_n\|^2_{H^2_*}=\bigg|\int_{\Omega}p_n\,fu_n\,dxdy\bigg|\leq \|p_n\|_\infty\|fu_n\|_1\leq \beta\|f\|_q\|u_n\|_{q'}\leq C_5\|u_n\|_{H^2_*},\\
 &ii)\qquad\|u_n\|^2_{H^2_*}=\big|\langle f,p_nu_n\rangle\big|\leq \|f\|_{H^{-2}_*}\|p_nu_n\|_{H^2_*}\leq C_6\|u_n\|_{H^2_*},
 \end{split}
 \end{equation}
	in which, in the last inequality we use that $H^2_*(\Omega)$ is a Banach algebra, \eqref{defdual} and $p_n\rightharpoonup p$ in $H^2(\Omega)$. This implies $\|u_n\|_{H^2_*}\leq \overline C$ for some $\overline C>0$; thus we get, up to a subsequence, $u_n\rightharpoonup \overline u$ in $H^2_*(\Omega)$ and we pass to the limit \eq{weakn2}
	\begin{equation*}
	(\overline u,v)_{H^2_*} =\langle f,pv\rangle \qquad\forall v\in H^2_*(\Omega),
	\end{equation*}
	obtaining by the uniqueness that $\overline u$ is the weak solution of \eqref{eigenweak}.\par 
	As in Lemma \ref{continuityV} we use the compact embedding $H^2_*(\Omega)\subset C^0(\overline \Omega)$, implying that the gap function $\mathcal{G}_{p_n}(x)$ converges uniformly to $\mathcal{G}_{p}(x)$ as $n\rightarrow+\infty$ for all $x\in[0,\pi]$.
\end{proof}

\begin{proof}[Proof of Theorem \ref{existence2} completed]
	By Lemma \ref{continuityLp2} we have that $p\mapsto \mathcal{G}^\infty_p$ is continuous on $\mathcal{P}_{W}^{\alpha,\beta}$ with respect to the proper topology associated to $W$ in \eqref{topology}. Moreover the set $\mathcal{P}_{W}^{\alpha,\beta}$ is compact for the correspondent topology, see \cite[Lemma 5.2]{befa} and Lemma \ref{compactness}; this readily implies the existence of the minimum \eq{min}. 
\end{proof}

\subsection{Proof of Proposition \ref{symteo}}
We follow the lines of \cite[Section 9]{bebugazu}, beginning with the second statement. 

$ii)$ Let $f\in\mathcal{F}_{H^{-2}_*}$ and $u_f\in H^2_*(\Omega)$ the solution of \eqref{eigenweak}. Being $p(x,y)$ even with respect to $y$, we use the decomposition \eqref{decomposition} and we rewrite \eqref{eigenweak} as 
\begin{equation}\label{eq1}
(u_f^o,v^o)_{H^2_*}+(u_f^e,v^e)_{H^2_*}=\langle pf^o,v^o\rangle+\langle pf^e,v^e\rangle\qquad \forall v\in H^2_*(\Omega).
\end{equation}
By \eqref{gapdef} we have $\mathcal{G}_{f,p}(x)=u^o(x,\ell)-u^o(x,-\ell)$; therefore, if $f^o=0$ then $u^o=0$ and $\mathcal{G}_{f,p}^\infty=0$, implying that $f$ cannot be a solution of \eqref{max}. Through \eqref{dualnorm} we infer the existence of $\gamma\in(0,1]$ such that $\gamma=\|f^o\|_{H^{-2}_*}\leq \|f\|_{H^{-2}_*}=1$. By linearity and \eqref{eq1} we observe that the problem $
(w,v)_{H^2_*}=\frac{1}{\gamma}\langle pf^o,v\rangle$ admits as solution $w=\frac{u^o}{\gamma}$ for all $v\in H^2_*(\Omega)$. Hence, by linearity, $\mathcal{G}_{\frac{f_0}{\gamma},p}^\infty=\frac{1}{\gamma}\mathcal{G}^\infty_{f,p}\geq\mathcal{G}^\infty_{f,p}$. Therefore for all $f\in \mathcal{F}_{H^{-2}_*}$ there exists $g\in H^{-2}_{\mathcal{O}}(\Omega)$ ($g=f^o/\gamma$) such that $\mathcal{G}^\infty_{g,p}\geq\mathcal{G}^\infty_{f,p}$, giving the thesis.

$i)$ In \cite[Lemma 9.1]{bebugazu} it is proved the following result: for $q\in[1,\infty]$, $a>0$ and $\phi\in L^q(]-a,a[)$ it holds 
\begin{equation}\label{eq2}
\|\phi^o\|_{ L^q(]-a,a[)}\leq \|\phi\|_{ L^q(]-a,a[)}.
\end{equation} 
Hence for every $q\in(1,\infty]$, \eqref{eq2} combined with the arguments used in the proof of Proposition \ref{symteo}-$ii)$ yields that $f$ odd with respect to $y$ is a maximizer.

For $q\in(1,\infty)$ we suppose, by contradiction, that $f\in\mathcal{F}_{L^q}$ is a non-odd maximizer. We point out that the inequality \eqref{eq2} is strict for  $q\in(1,\infty)$ if and only if $\phi$ is non-odd ($\phi\not\equiv\phi^o$), see again \cite[Lemma 9.1]{bebugazu} for a proof. Therefore, being $f\not\equiv f^o$, we get $\|f^o\|_q<\|f\|_q=1$; we take $\overline f=f^o/\|f^o\|_q$, so that $\|\overline f\|_q=1$. Since $f^e$ does not play a role in the gap function, we have $\mathcal{G}^\infty_{\overline f,p}=\frac{\mathcal{G}^\infty_{ f,p}}{\|f^o\|_q}>\mathcal{G}^\infty_{f,p}$. This is absurd.\hfill$\square$

\subsection{Proof of Proposition \ref{gap}}
		We choose $\{z^p_m,\theta^p_{m}\}_{m=1}^\infty$ as orthonormal basis of $L^2_p(\Omega)$ (and orthogonal basis of $H^2_*(\Omega)$).
	Since $f\in L^2(\Omega)\subset L^2_p(\Omega)$ we expand it in Fourier series
	\begin{equation*}\label{fourierf}
	f(x,y)=\sum_{m=1}^{\infty}\left[a_{m}\theta^p_{m}(x,y)+b_{m}z^p_{m}(x,y)\right]\,,
	\end{equation*}
	with $a_m,b_m \in \R$ defined as
	$$
	a_m:=\int_{\Omega} pf\,\theta^p_{m}\,dxdy\qquad b_m:=\int_{\Omega} pf\,z^p_{m}\,dxdy.
	$$
 We  write
	\begin{equation*}\label{fourier}
	u(x,y)=\sum_{m=1}^{\infty}\left[\alpha_{m}\theta^p_{m}(x,y)+\beta_{m}z^p_{m}(x,y)\right]\,,
	\end{equation*}
	where $\alpha_m,\beta_m \in \R$ are defined as
	$$
	\alpha_m:=\int_{\Omega} pu\,\theta^p_{m}\,dxdy\qquad \beta_m:=\int_{\Omega} pu\,z^p_{m}\,dxdy.
	$$
For all $m\in \mathbb{N}^+$, $z^p_{m}$ and $\theta^p_{m}$ solve:
	\begin{equation}\label{base1d}
	\begin{split}
	(z^p_{m},v)_{H^2_*}=\mu_{m}(p)\, (p\,z^p_{m}, v)_{L^2} \quad &\forall v \in H^2_*(\Omega)\,\\
	(\theta^p_{m},v)_{H^2_*}=\nu_{m}(p)\,(p\,\theta^p_{m}, v)_{L^2} \quad &\forall v \in H^2_*(\Omega)\,.
	\end{split}
	\end{equation}
	Then considering \eq{eigenweak} with $v=\theta^p_m$, $v=z^p_m$ and putting $v=u$ in \eqref{base1d} we have
	$$
	\alpha_m=\dfrac{a_m}{\nu_m(p)}\qquad \beta_m=\dfrac{b_m}{\mu_m(p)}
	$$
	and \eq{fourieru}.\par 
	Now we verify that $u(x,y)$ written in Fourier series as \eq{fourieru} belongs to $H^2_*(\Omega)$. Through \eqref{base1d} we obtain that $\bigg\{\dfrac{\theta^p_{m}}{\sqrt{\nu_m(p)}},\dfrac{z^p_m}{\sqrt{\mu_m(p)}}\bigg\}_{m=1}^\infty$
	is an orthonormal basis in $H^2_*(\Omega)$; therefore, if $\bigg\{\dfrac{a_{m}}{\sqrt{\nu_m(p)}},\dfrac{b_m}{\sqrt{\mu_m(p)}}\bigg\}_m\subset \ell^2(\mathbb{N}^+)$ we infer $u\in H^2_*(\Omega)$.
	We recall the variational representation of the eigenvalues of \eqref{weighteig}: for every $m\in \N^+$ it holds
	\begin{equation*}\label{caract1}
	\lambda_m(p)=\inf_{\substack{W_m\subset H^2_*(\Omega)\\\dim W_m=m}}\hspace{2mm}\sup_{\substack{u\in W_m\setminus\{0\}}}\frac{\|u\|^2_{H^2_*}}{\|\sqrt{p}u\|^2_2},
	\end{equation*} 
	implying the stability inequality
	$$\frac{\lambda_m(1)}{\beta}\leq \lambda_m(p)\leq \frac{\lambda_m(1)}{\alpha}\,,$$
	for every $m\in\mathbb{N}^+$. In \cite[Theorem 7.6]{fergaz} the authors find explicit bounds for the eigenvalues when the plate is homogeneous ($p\equiv 1$); in general it holds $\lambda_m(1)>(1-\sigma)^2m^4$, where $\sigma$ is the Poisson ratio, see \eqref{poisson}. Then we obtain
	$$
	\lambda_m(p)\geq \frac{\lambda_m(1)}{\beta}> \frac{(1-\sigma)^2m^4}{\beta}
	$$
	so that, being $\|f\|_2=\|\sqrt{p}\theta_m^p\|_2=1$, 
	$$
	\dfrac{|a_{m}|}{\sqrt{\nu_m(p)}}\leq\dfrac{\sqrt{\beta}\|f\|_2\|p\theta_m^p\|_2}{(1-\sigma)m^2}\leq\dfrac{\beta\|\sqrt{p}\theta_m^p\|_2}{(1-\sigma)m^2} =\dfrac{\beta}{(1-\sigma)m^2}\quad\qquad\dfrac{|b_m|}{\sqrt{\mu_m(p)}}\leq\dfrac{\beta}{(1-\sigma)m^2}
	$$
	and 
	$$
	\sum_{m=1}^\infty \frac{|a_{m}|^2}{\nu_m(p)}+\frac{|b_{m}|^2}{\mu_m(p)}\leq \frac{2\beta^2}{(1-\sigma)^2}\sum_{m=1}^\infty \frac{1}{m^4}<\infty.
	$$ 
	
	Through \eqref{fourieru} we get
	$$
	\mathcal{G}_{f,p}(x)=2\sum\limits_{m=1}^\infty\dfrac{a_m}{\nu_m(p)}\theta^p_m(x,\ell)\qquad \forall x\in[0,\pi],
	$$
	since $z^p_m(x,y)$ is $y$-even.\par If $f$ is $y$-odd then $b_m=0$.
\hfill$\square$

 \section{Conclusions}
In this paper we consider a stationary forced problem for a non-homogeneous
partially hinged rectangular plate, possibly modeling the deck of a bridge,
on which a non constant density function $p(x,y)$, embodying the non-homogeneity,
is given. The main aim is to optimize the torsional performance of the plate,
measured through the so called gap function $\mathcal{G}_{f,p}(x)$, see \eqref{gapdef}, with respect to both the weight $p$ and the external forcing term $f$; thus, we deal with the problem \eqref{gapmax} where $f$ and $p$ belong to suitable classes of functions.

 In Theorem \ref{existence} we prove the existence of an optimal force $f$ solution of \eqref{max} fixed the weight $p$ in proper functional spaces, while in the Theorem \ref{existence2} we prove the converse, i.e. the existence of an optimal density $p$ solution of \eqref{min} fixed $f$.
 Currently to find explicitly the solutions of \eqref{max} and \eqref{min} seems out of reach, therefore we propose some choices of $f$ and $p$ and we proceed numerically. 
In Proposition \ref{symteo} we prove symmetry properties on the solutions of \eqref{max}; motivated by this result, we focus on $y$-odd forces $f$ as optimal candidates of \eqref{max}.
On the other hand about the possible optimal weight functions we study five meaningful density configurations; the latter are inspired by \cite{befa}, where a similar problem in terms of weight optimization of the ratio between a torsional and a longitudinal eigenvalue  is given, see \eqref{opt_intro}.

We propose some numerical experiments when $f\in L^2(\Omega)$, because it is representative of the applications we have in mind; in this case, we state and prove Proposition \ref{gap} allowing to find a numerical scheme useful to determine the approximated solutions. Our analysis is performed imposing as parameters \eqref{numpar}-\eqref{alphabeta}, having sense in terms of civil engineering applications. We summarize our main outcomes:
\begin{itemize}
	\item[-] The forces $f\in\mathcal{F}_{L^2}$ which maintain the same (and opposite) sign along the two free edges of the plate (e.g. $\overline f_0$, $\overline f_1$) seem to be the worst in terms of torsional performance of the plate for each density function.
	\item[-] If we consider $f\propto\theta^p_j(x,y)$, i.e. proportional to the $j$-th weighted torsional eigenfunction, we get the corresponding maximum of the gap function decreasing with respect to $j$ for every density function; this means that the worst case is recorded for $j=1$, i.e. when $f$ is in resonance with the first weighted torsional eigenfunction, see Figure \ref{fig1}.
	\item[-] To improve the torsional performance of the plate, we suggest to strengthen it with a density function like $p^*(x,y)$ or $\breve p(y)$, see Section \ref{weightsec}. These weights have a strong effect in increasing the first torsional eigenvalues and they reduce the maximum of the gap function more than the others.
	\item[-] Weights as $\overline p_i(x)$ ($i\in\mathbb{N}^+$), useful to reduce the $i$-th longitudinal eigenvalue, generally do not affect the torsional response of the plate. We recorded the same behaviour as in the homogeneous plate, hence we do not suggest this kind of reinforce.   
\end{itemize}
A future development in this field is the study of the corresponding evolutionary problem. We point out that the presence of a possibly discontinuous coefficient $p(x,y)$ in front of the time-derivative term may lead to some problems, even just in writing the equation in strong form. 

Other researches may focus on other forces and density functions; is there a density function that maximizes the second torsional eigenvalue better than those in $\widehat P_{\alpha,\beta}$? How does the gap function vary in correspondence of such weight? In \cite{befa} it is  pointed out that $p^*(x,y)$ may be the candidate maximizer of the first torsional eigenvalue, but nothing is said about the maximizer of the second torsional eigenvalue. It may be interesting to study this issue, since the deck of a suspension bridge seems to be more prone to develop torsional instability on the second torsional eigenvalue, see for instance \cite{bookgaz,crfaga}.

\par\bigskip\noindent
\textbf{Acknowledgements.} The author is grateful to the anonymous referees whose relevant comments and suggestions helped in improving the exposition of the paper. The author is partially supported by the INDAM-GNAMPA 2019 grant ``Analisi spettrale per operatori ellittici con condizioni di Steklov o parzialmente incernierate'' and by the PRIN project ``Direct and inverse problems for partial differential equations: theoretical aspects and applications'' (Italy).

\end{document}